\newtheorem{theorem}{Theorem}[section]
\newtheorem{lemma}[theorem]{Lemma}
\theoremstyle{definition}
\newtheorem{definition}[theorem]{Definition}
\newtheorem{remark}{Remark}
\title[Propagation of  stretched exponential moments]
      {Propagation of stretched exponential moments for the Kac equation and Boltzmann equation with Maxwell molecules}
\author[Milana Pavi\'c-\v Coli\'{c} and Maja Taskovi\'c]{}
 \email{milana.pavic@dmi.uns.ac.rs}
 \email{taskovic@math.upenn.edu}
\begin{document}
\maketitle

\centerline{\scshape Milana Pavi\'c-\v Coli\'{c}}
\medskip
{\footnotesize
  \centerline{Department of Mathematics and Informatics}
  \centerline{Faculty of Sciences, University of Novi Sad}
 \centerline{Trg Dositeja Obradovi\'ca 4, 21000 Novi Sad, Serbia}
}

\medskip

\centerline{\scshape Maja Taskovi\'{c}}
\medskip
{\footnotesize
 \centerline{Department of Mathematics}
 \centerline{University of Pennsylvania}
 \centerline{David Rittenhouse Lab.}
 \centerline{209 South 33rd Street, Philadelphia, PA 19104}

}

\bigskip

\begin{abstract}
We study the spatially homogeneous Boltzmann equation for Ma\-xwell molecules, and its $1$-dimensional model, the Kac equation. We prove propagation in time of stretched exponential moments of their weak solutions, both for the angular cutoff and the angular non-cutoff case. The order of the stretched exponential moments in question depends on the singularity rate of the angular kernel of the Boltzmann and the Kac equation. One of the main tools we use are Mittag-Leffler moments, which generalize the exponential ones.
\end{abstract}

\section{Introduction}

In this paper we study exponential tails (exponentially weighted $L^1$ norms) of weak solutions to the Kac equation and the spatially homogeneous Boltzmann equation for Maxwell molecules. We show propagation in time of such tails, both in the so-called cutoff and the non-cutoff case.

Both the Kac equation and the Boltzmann equation model the evolution of a probability distribution of particles inside a gas interacting via binary collisions. The models we consider are spatially homogeneous, which means that the probability distribution $f(t,v)$ depends only on time $t$, velocity $v$, but not of the spatial variable $x$. The Kac equation is a model for a 1-dimensional spatially homogeneous gas in which collisions conserve the mass and the energy, but not the momentum. On the other hand, the spatially homogeneous Boltzmann equation describes a gas in a $d$-dimensional space, with $d\geq2$, in which particles collisions are elastic, meaning they conserve the mass, momentum and energy.

The probability distribution function $f(t,v)$, for time $t\in \mathbb{R}^+$ and velocity $v\in \mathbb{R}^d$ (with $d=1$ for the Kac equation and $d\geq 2$ for the Boltzmann equation), changes due to the free transport and collisions. In the case of the spatially homogeneous Kac equation its evolution is modeled by the following equation
\begin{equation}\label{K}
\partial_t f(t,v) = \int_{\mathbb{R}} \int_{-\pi}^{\pi} \left( f' \, f'_* - f \, f_* \right) b_K(|\theta|) \, \mathrm{d} \theta \, \mathrm{d} v_*.
\end{equation}
The spatially homogeneous Boltzmann equation on the other hand reads
\begin{equation}\label{B}
\partial_t f(t,v) = \int_{\mathbb{R}^d} \int_{S^{d-1}} \left( f' \, f'_* - f \, f_* \right) |v-v_*|^\gamma \, b_B\left(\tfrac{v-v_*}{|v-v_*|}\cdot \sigma\right) \, \mathrm{d} \sigma \, \mathrm{d} v_*,
\end{equation}
which in the case of Maxwell molecules ($\gamma=0$) reduces to
\begin{equation}\label{MM}
\partial_t f(t,v) = \int_{\mathbb{R}^d} \int_{S^{d-1}} \left( f' \, f'_* - f \, f_* \right) \, b_B\left(\tfrac{v-v_*}{|v-v_*|}\cdot \sigma\right) \, \mathrm{d} \sigma \, \mathrm{d} v_*.
\end{equation}

Details about the notation employed are contained in Section \ref{sec-k} for the Kac equation and in Section \ref{sec-b} for the Boltzmann equation. For now we only remark that for both equations we consider angular kernels $b_K$ and $b_B$ that may of may not be integrable. When the angular singularity is non-integrable, our results depend on the singularity rate of the kernels.

The Kac equation \eqref{K} and the corresponding Boltzmann equation for Maxwell molecules \eqref{MM} share many properties (one notable difference is that the Kac equation does not conserve the momentum). In particular, both equations propagate polynomial and exponential moments, whose definitions we now recall.
\begin{definition} The polynomial moment of order $q$ of
the distribution function $f$ is defined by
\begin{equation}\label{polynomial moment}
m_q(t): = \int_{\mathbb{R}^d} f (t,v) \langle v \rangle^q \, \mathrm{d} v.
\end{equation}
\end{definition}
\begin{definition} The stretched exponential moment of order $s$ and rate $\alpha$ of
the distribution function $f$ is defined by
\begin{equation}\label{exp moment}
M_{\alpha,s}(t): = \int_{\mathbb{R}^d} f (t,v) e^{\alpha \langle v \rangle^s} \, \mathrm{d} v, \quad \alpha>0.
\end{equation}
In this paper, the special case when $s=2$ is referred to as the Maxwellian moment.
\end{definition}
When $f$ solves the Kac equation, the dimension $d$ in these formulas is one. We also remark that we use the following notation $\langle x \rangle:= \sqrt{1+  x_1^2 + \dots x_d^2}$, for any $x=(x_1,\dots, x_d)\in \mathbb{R}^d$, $d\geq 1$. The results presented in this paper are also valid when the moments are defined with absolute values $|v|$ in place of $\langle v \rangle$.\\

In the case of the Kac equation, the study of stretched exponential moments goes back to \cite{de93}. There, the constant angular kernel is considered, and the propagation of  stretched exponential moments of orders $s=1$ and $s=2$ is proved.

For the Boltzmann equation, propagation of Maxwellian moments was proved in the case of Maxwell molecules $\gamma=0$ in \cite{bo84, bo88} and recently in \cite{boga17} via Fourier transform techniques.  The theory was later extended to hard potentials $\gamma\in(0,1]$  in the context of Maxwellian moments in \cite{bo97, bogapa04, gapavi09, boga17}, and in the context of stretched exponential moments in \cite{mo06, alcagamo13}. Finally, the propagation of stretched exponential moments for the non-integrable angular kernels are studied in  \cite{lumo12, algapata15}.\\

In this paper, we generalize results of \cite{de93} to include more general orders of stretched exponentials, namely $s\in(0,2]$. The angular kernels that we study  are more general and may or may not be integrable. In the case of non-integrable angular kernels, the singularity rate affects the order of moments that propagate in time. In addition, we apply same technique to prove propagation of  stretched exponential moments for the Boltzmann equation with $\gamma =0$, thus extending the result of \cite{algapata15}.

We point out that the method we employ in this paper differs from the approach in \cite{de93}. Elegant calculations for exponential moments \eqref{exp moment} of order $s=1$ and $s=2$ in \cite{de93} are done directly at the level of exponential moments. In this manuscript, we take a different route and express exponential moments as infinite sums of polynomial moments and then strive to show that such infinite sums are finite. Such approach has been first developed in the context of the Boltzmann equation in \cite{bo97}, where the following fundamental relation was noted
\begin{align} \label{tt1}
 M_{\alpha,s}(t) = \int_{\mathbb{R}^d} f(t,v) \sum_{q=0}^\infty \frac{\alpha^q \langle v\rangle^{sq}}{q!}
	\; = \; \sum_{q=0}^\infty \frac{\alpha^q \; m_{sq}(t)}{\Gamma(q+1)}.
\end{align}
Finiteness of such sums can be studied by proving term-by-term geometric decay, or by showing that partial sums are uniformly bounded.

Our proof is inspired by the works \cite{alcagamo13, algapata15}, where the partial sum approach is developed. Moreover, motivated by \cite{algapata15}, we exploit the notion of Mittag-Leffler moments, which serve as a generalization of stretched exponential moments and which are very flexible for the calculations at hand. We recall the definition and the motivation for Mittag-Leffler moments in Section \ref{sec-ml}.

The paper is organized as follows. A brief review of the Kac equation in provided in Section \ref{sec-k}, while the review of the Boltzmann equation is contained in Section \ref{sec-b}. In Section \ref{sec-main} we state our main result. Section \ref{sec-ml} recalls the notion of Mittag-Leffler functions and moments, one of the main tools in the proof of the main theorem. Section \ref{sec-lem} contains another key tool - an angular averaging lemma with cancellation. In Section \ref{sec-pol}, the angular averaging lemma is used to derive  differential inequalities satisfied by polynomial moments of the solution to the Cauchy problem under the consideration. Finally, in Section \ref{sec-pf1} we provide the proof of the main theorem. The Appendix lists auxiliary Lemmas.

\section{The spatially homogeneous Kac equation}\label{sec-k}

The Kac model statistically describes the state of the gas in one dimension. The main object is the distribution function $f(t,x,v)\geq 0$ which depends on time $t\geq 0$, space position $x \in \mathbb{R}$ and velocity $v \in \mathbb{R}$, and which changes in time due to the free transport and collisions between gas particles. Assuming that collisions are binary and that they conserve mass and energy, but not momentum, the evolution of the distribution function is determined by the Kac equation.

In this paper we assume that the distribution function does not depend on the space position $x$, i.e. $f:=f(t,v)$. In that case, $f$ satisfies the spatially homogeneous Kac equation
\begin{equation}\label{Kac}
\partial_t f(t,v) = K(f,f)(t,v),
\end{equation}
where the collision operator $K(f,f)$ is defined by
\begin{equation}\label{K(f)}
K(f,f)(t,v)= \int_{\mathbb{R}} \int_{-\pi}^{\pi} \left( f' \, f'_* - f \, f_* \right)b_K(|\theta|) \, \mathrm{d} \theta \, \mathrm{d} v_*,
\end{equation}
with the standard abbreviations $f_*:=f(t,v_*)$, $f':=f(t,v')$, $f'_*:=f(t,v'_*)$.

The velocities    $v', v'_*$ and $v, v_*$ denote the pre and  post-collisional velocities for the pair of  colliding particles, respectively. A collision conserves the  energy of the two particles
\begin{equation*}\label{CL kinetic energy micro}
v'^{2} + v_*'^{2} = v^2 + v^2_*,
\end{equation*}
so by introducing  a parameter $\theta \in [-\pi,\pi]$, the collision rules read
\begin{align}\label{velocities}
v' & = v \, \cos \theta - v_* \sin \theta,\\
v'_* & = v \sin \theta + v_* \cos \theta. \nonumber
\end{align}
Note that a 2-dimensional vector $(v',v'_*)$ can be viewed as a rotation of the 2-dimensional vector $(v,v_*)$ by the angle $\theta$.

In this paper we assume that the angular kernel $b_K(|\theta|)\geq 0$  satisfies the following assumption
\begin{equation}\label{cross section}
\int_{-\pi}^\pi b_K(|\theta|) \, \sin^\kappa\theta \, \mathrm{d} \theta < \infty, \ \text{for some} \ \kappa \in [0,2].
\end{equation}
The case $\kappa = 0 $ corresponds to the so called Grad's cutoff case, when the angular kernel is integrable on $[-\pi,\pi]$. Otherwise, when $\kappa$ is strictly positive, i.e. the non-cutoff case, $b_K(|\theta|)$ is allowed to have $\kappa$ more degrees of singularity at $\theta=0$.

\subsection{Weak formulation of the collision operator}
Since the Jacobian of the transformation \eqref{velocities} is unit, for a test function $\phi(v)$, the weak formulation of the collision operator $K(f,f)$ reads
\begin{multline}\label{weak formulation K}
\int_{\mathbb{R}}  K(f,f) \, \phi(v) \, \mathrm{d} v \\ = \frac{1}{2} \int_{\mathbb{R}}\int_{\mathbb{R}} \int_{-\pi}^{\pi} f f_* \left( \phi(v') + \phi(v'_*) - \phi(v) - \phi(v_*) \right) b_K(|\theta|) \, \mathrm{d} \theta \, \mathrm{d} v_* \, \mathrm{d} v.
\end{multline}

\subsection{Weak solutions of the Kac equation} We recall the definition of a  weak solution to the Cauchy problem for the Kac equation
\begin{equation}\label{Cauchy1}
\left\{\begin{split} \partial_t f(t,v)&=K(f,f)(t,v) \quad t\in \mathbb{R}_+, \ v \in \mathbb{R},\\f(0,v)&=f_0(v), \end{split}\right.
\end{equation}
whose existence was proved in \cite{de93} for the cutoff case, i.e. $\kappa=0$, and  in \cite{de95} for the non-cutoff case, i.e. $\kappa \in (0,2]$.

\begin{definition}\label{def-weak sol for Kac}
Let $f_0\geq 0$ be a function defined on $\mathbb{R}^d$ with finite mass, energy and entropy, i.e.
\begin{equation}\label{Kac initial data}
\int_{\mathbb{R}} f_0(v) \left( \langle v \rangle^2 + \left| \log f_0 (v) \right| \right) \mathrm{d}v <  \infty.
\end{equation}
Then we say $f \geq 0$ is a {\it weak solution} to the Cauchy problem \eqref{Cauchy1} with $K(f,f)$ given by \eqref{K(f)}   if $f(t,v) \in L^{\infty}\left( \left[0, + \infty \right); L^1_2   \right)$, and
for all test functions $\phi \in W^{2,\infty}(\mathbb{R}_v)$ we have
\begin{equation*}
\partial_t \int_{\mathbb{R}} f \phi(v) \mathrm{d}v = \int_{\mathbb{R}} \int_{\mathbb{R}} K^\phi(v,v_*) f f_* \mathrm{d}v\mathrm{d}v_*,
\end{equation*}
where
\begin{equation*}
K^\phi(v,v_*) = \int_{-\pi}^\pi \left( \phi(v') - \phi(v) \right) b_K(|\theta|) \mathrm{d}\theta.
\end{equation*}
\end{definition}
For these solutions conservation of mass holds
\begin{equation}\label{CL}
\int_{\mathbb{R}} f(t,v) \, \mathrm{d} v = \int_{\mathbb{R}} f_0(v) \,  \mathrm{d} v.
\end{equation}
while the energy decreases in time. However, the energy is conserved, that is
\begin{equation*}
\int_{\mathbb{R}} f(t,v) \, v^2 \, \mathrm{d} v = \int_{\mathbb{R}} f_0(v) \, v^2 \, \mathrm{d} v,
\end{equation*}
if  there exists $C>0$ such that
$$
\int_\mathbb{R} f_0 (v) \left(1+|v|^{2p}\right) \, \mathrm{d}v <C,
$$
for some $p\geq2$. For details, see \cite{de95}.

\section{The spatially homogeneous Boltzmann equation}\label{sec-b}

The state of gas particles which at a time $t\in \mathbb{R}^+$ have a  position $x\in\mathbb{R}^d$ and velocity $v\in \mathbb{R}^d$, $d\geq2$, is statistically described by  the distribution function $f=f(t,x,v) \geq 0$. The evolution of such distribution function is modeled by the Boltzmann equation, which takes into account the effects of the free transport and collisions on $f$.
The collisions are assumed to be binary and elastic, that is, they conserve mass, momentum and energy for any pair of colliding particles.

When the distribution function is independent of the spatial variable $x$, that is $f:=f(t,v)\geq 0$, which is the so called spatially homogeneous case, the Boltzmann equation reads
\begin{equation}\label{Boltzmann}
\partial_t f(t,v) = Q(f,f)(t,v).
\end{equation}
The collision operator $Q(f,f)$ is defined by
\begin{equation}\label{Q(f,f)}
Q(f,f)(t,v)= \int_{\mathbb{R}^d} \int_{S^d-1} \left( f' \, f'_* - f \, f_* \right) |v-v_*|^\gamma b_B\left(\hat{u}\cdot\sigma\right) \, \mathrm{d} \sigma \, \mathrm{d} v_*,
\end{equation}
with the standard abbreviations $f_*:=f(t,v_*)$, $f':=f(t,v')$, $f'_*:=f(t,v'_*)$. For a pair of particles, vectors $v', v'_*$  denote pre-collisional velocities,  while vectors $v, v_*$ denote their post-collisional velocities. Local momentum and energy are conserved, i.e.
\begin{align*}
v'+v'_* &= v+v_*\\
|v'|^2 + |v_*'|^2 &= |v|^2 + |v|^2_*.
\end{align*}
Thus by introducing a parameter $\sigma \in S^{d-1}$, the collision laws can be expressed as
\begin{align}\label{B-velocities}
v' & = \frac{v+v_*}{2} + \frac{|v-v_*|}{2}\sigma,\\
v'_* & = \frac{v+v_*}{2} - \frac{|v-v_*|}{2}\sigma. \nonumber
\end{align}
The unit vector $\sigma \in S^{d-1}$ has the direction of the relative velocity $u' = v'-v'_*$, while the normalization of the relative velocity $u=v-v_*$ is denoted by  $\hat{u}:=\frac{u}{|u|}$. The angle between these two directions, denoted by $\theta$, is called the scattering angle and it satisfies $\hat{u} \cdot \sigma = \cos\theta$.

Due to physical considerations, the parameter $\gamma$ is a number in the range $(-d, 1]$. In this paper we consider the Maxwell molecules model, which corresponds to
\begin{align}
\gamma = 0.
\end{align}

  The angular kernel $b_B(\hat{u}\cdot \sigma)= b_B(\cos\theta)$ is a non-negative function that encodes the likelihood of collisions between particles. It has a singularity for $\sigma$ that satisfies $\hat{u}\cdot \sigma =1$, i.e.  $\theta =0$, which may or may not be integrable in $\sigma \in S^{d-1}$. Its integrability is often referred to as the angular cutoff, while its non-integrability is referred to as the non-cutoff case. In this paper we assume that
\begin{equation}\label{Boltzmann cross section}
\int_0^\pi b_B(\cos\theta) \, \sin^\beta\theta \sin^{d-2}\theta \, d\theta < \infty, \ \text{for some} \ \beta \in [0,2].
\end{equation}
The case $\beta = 0$ corresponds to $b_B (\hat{u} \cdot \sigma)$ being integrable in $\sigma \in S^{d-1}$, i.e. it corresponds to the cutoff case. When $\beta >0$, then the angular kernel $b_B$ is allowed to have $\beta$ more degrees of singularity compared to the cutoff case.

In particular,  in the case of inverse  power-law potentials for the Maxwell molecules, the interaction potential in 3 dimensions is of the form $V(r)=r^{-4}$. Then a nonintegrable singularity of the function $b_B$ is known
\begin{equation*}
b_B(\cos\theta) \, \sin\theta \sim \theta^{-\frac{3}{2}}, \quad \theta \rightarrow 0.
\end{equation*}
Therefore, $\beta$ should satisfy $\beta>\frac{1}{2}$.

\subsection{Weak formulation of the collision operator}
Since the Jacobian of the pre to post collision transformation is unit and due to the symmetries of the kernel, for any sufficiently smooth test function $\phi(v)$, the weak formulation of the collision operator $Q(f,f)$ reads
\begin{multline}\label{weak formulation B}
\int_{\mathbb{R}^d}  Q(f,f) \, \phi(v) \, \mathrm{d} v \\ = \frac{1}{2} \int_{\mathbb{R}^{2d}} f f_* |v-v_*|^\gamma \int_{S^{d-1}}  \left( \phi(v') + \phi(v'_*) - \phi(v) - \phi(v_*) \right) b_B(\hat{u}\cdot \sigma) \, \mathrm{d} \sigma \, \mathrm{d} v_* \, \mathrm{d} v.
\end{multline}

\subsection{Weak solutions to the Boltzmann equation}
We recall the definition of a weak solution to the Cauchy problem for the Boltzmann equation
\begin{equation}\label{Boltzmann Cauchy}
\left\{\begin{split} \partial_t f(t,v)&=Q(f,f)(t,v) \quad t\in \mathbb{R}_+, \ v \in \mathbb{R}^d,\\f(0,v)&=f_0(v), \end{split}\right.
\end{equation}
whose existence in three dimensions and for the angular kernel \eqref{Boltzmann cross section} with $\beta \in [0,2]$ is proved in \cite{ar81, vi98}.

\begin{definition}\label{defn-weak}
Let $f_0 \geq 0$ be a function defined in $\mathbb{R}^d$ with finite mass, energy and entropy
\begin{align}\label{Boltzmann initial data}
\int_{\mathbb{R}^d} f_0(v) \, \left( 1 + |v|^2 + \log(1+f_0(v))\right) \, dv < +\infty.
\end{align}
Then we say $f$ is a {\it weak solution} to the Cauchy problem \eqref{Boltzmann Cauchy} if it satisfies the following conditions
\begin{itemize}
\item $f\geq 0, \; f \in C(\mathbb{R}^+; \mathcal{D}'(\mathbb{R}^d)) \, \cap \, f \in L^1([0,T]; L^1_{2+\gamma})$ \vspace{5pt}
\item $f(0,v) = f_0(v)$ \vspace{5pt}
\item $\forall t\geq 0$: \; $\int f(t,v) \psi(v) dv = \int f_0(v) \psi(v) dv$, for $\psi(v) = 1, v_1, ..., v_d, |v|^2$ \vspace{5pt}
\item $f(t,\cdot) \in L\log L$ and $ \forall t\geq 0: \, \int f(t,v) \log f(t,v) dv \leq \int f_0(v) \log f_0 dv$ \vspace{5pt}
\item $\forall \phi(t,v) \in C^1(\mathbb{R}^+, C^\infty_0(\mathbb{R}^3))$, $\forall t \geq 0$ we have that
\begin{align*}
\int_{\mathbb{R}^d} f(t,v) \phi(t,v) dv \,  - \, \int_{\mathbb{R}^d} f_0(v) \phi(0,v) dv
\,& - \, \int_0^t d\tau \int_{\mathbb{R}^d} f(\tau, v) \partial_\tau \phi(\tau,v) dv\\
& = \int_0^t d\tau \int_{\mathbb{R}^d} Q(f,f)(\tau,v) \phi(\tau,v) dv.
\end{align*}
\end{itemize}

\end{definition}

\section{The main results}\label{sec-main}

Our main result establishes propagation of stretched exponential moments for the Kac equation and for the Boltzmann equation corresponding to Maxwell molecules.
\begin{theorem}\label{thm}
Suppose initial datum $f_0 \geq 0$ has finite mass, energy and entropy, i.e. \eqref{Kac initial data} in case of the Kac equation and \eqref{Boltzmann initial data} in the case of the Boltzmann equation.
\begin{itemize}
\item[(a)] Kac equation: Let $f(t,v)$ be an associated weak solution to the Cauchy problem \eqref{Cauchy1}, with \eqref{K(f)} and with the angular kernel satisfying \eqref{cross section} with $\kappa \in [0,2]$. If
\begin{equation}\label{order K}
s \leq  \displaystyle\frac{4}{2+\kappa},
\end{equation}
then for every $\alpha_0>0$ there exists  $0<\alpha \leq \alpha_0$ and a constant $C>0$ (depending only on the initial data and $\kappa$) so that
\begin{align}\label{K thm}
&\mbox{if} \,\, \int_{\mathbb{R}} f_0(v) e^{\alpha_0 \langle v \rangle^s} \mathrm{d} v \leq M_0 < \infty, \nonumber \\
& \qquad  \mbox{then} \,\, \int_{\mathbb{R}} f(t,v) e^{\alpha \langle v \rangle^s} \mathrm{d} v\leq C, \,\,\, \forall t\geq0.
\end{align}
\item[(b)] Boltzmann equation for Maxwell molecules: Let $f(t,v)$ be an associated weak solution to the Cauchy problem \eqref{Boltzmann Cauchy} with the angular kernel satisfying \eqref{Boltzmann cross section} with $\beta \in [0,2]$. If
\begin{equation}\label{order B}
s \leq  \displaystyle\frac{4}{2+\beta},
\end{equation}
then for every $\alpha_0>0$ there exists  $0<\alpha \leq \alpha_0$ and a constant $C>0$ (depending only on the initial data and $\beta$) so that
\begin{align}\label{B thm}
& \mbox{if} \,\, \int_{\mathbb{R}^d} f_0(v) e^{\alpha_0 \langle v \rangle^s} \mathrm{d} v \leq M_0 < \infty, \nonumber \\
& \qquad  \mbox{then} \,\, \int_{\mathbb{R}^d} f(t,v) e^{\alpha \langle v \rangle^s} \mathrm{d} v\leq C, \,\,\, \forall t\geq0.
\end{align}
\end{itemize}
\end{theorem}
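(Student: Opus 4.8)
The plan is the route sketched in the introduction: trade the stretched exponential weight for a Mittag-Leffler weight, expand it into a series of polynomial moments, and bound the partial sums of that series — uniformly in the truncation — by an ODE comparison. \emph{Step 1 (reduction to Mittag-Leffler moments).} Put $a=2/s$; since $s\in(0,2]$ we have $a\ge1$, and the hypothesis $s\le\frac{4}{2+\kappa}$ (resp.\ $s\le\frac{4}{2+\beta}$) is exactly $a\ge1+\frac{\kappa}{2}$ (resp.\ $a\ge1+\frac{\beta}{2}$). By the asymptotics of the Mittag-Leffler function recalled in Section~\ref{sec-ml}, $E_a(\mu\langle v\rangle^2)$ is comparable, up to a polynomial factor in $\langle v\rangle$, to $e^{\mu^{1/a}\langle v\rangle^{s}}$; hence, after adjusting the rate, $\int f_0\,e^{\alpha_0\langle v\rangle^s}<\infty$ amounts to finiteness of $\mathcal E_a[f_0]=\int f_0\,E_a(\mu_0\langle v\rangle^2)\,\mathrm dv=\sum_{q\ge0}\mu_0^{\,q}m_{2q}(0)/\Gamma(aq+1)$, and likewise for $f(t)$. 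So it suffices to exhibit $\mu\le\mu_0$ for which $\mathcal E_a[f(t)]$ with rate $\mu$ stays bounded on $[0,\infty)$. One works with the truncations $\mathcal E_a^n[f](t)=\sum_{q=0}^n\mu^q m_{2q}(t)/\Gamma(aq+1)$, which are $C^1$ in $t$; the goal becomes a bound on $\mathcal E_a^n$ uniform in $n$ and $t$, after which monotone convergence concludes.

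\emph{Step 2 (moment ODE and summation).} Using the angular averaging lemma with cancellation of Section~\ref{sec-lem} in the weak formulation with test function $\langle v\rangle^{2q}$ (legitimized by the routine truncation of the test function), Section~\ref{sec-pol} yields a differential inequality of the schematic shape
\[
\frac{\mathrm d}{\mathrm dt}m_{2q}\ \le\ -\lambda_q\,m_0\,m_{2q}\ +\ \lambda_q\sum_{k=1}^{\lfloor q/2\rfloor}\binom{q}{k}\Theta_{k,q}\Big(m_{2k}m_{2(q-k)}+\text{l.o.t.}\Big),
\]
where $\lambda_q>0$ is the coercivity constant extracted by the cancellation — bounded below uniformly, and in the non-cutoff case growing with $q$ at a rate governed by the singularity exponent $\kappa$ (resp.\ $\beta$) — and $\Theta_{k,q}$ are bounded angular factors. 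Multiplying by $\mu^q/\Gamma(aq+1)$, summing over $1\le q\le n$, splitting $\mu^q=\mu^k\mu^{q-k}$, and rewriting
\[
\frac{\binom{q}{k}}{\Gamma(aq+1)}=\frac{C^{(a)}_{k,q}}{\Gamma(ak+1)\,\Gamma(a(q-k)+1)},\qquad C^{(a)}_{k,q}:=\frac{\Gamma(q+1)\,\Gamma(ak+1)\,\Gamma(a(q-k)+1)}{\Gamma(k+1)\,\Gamma(q-k+1)\,\Gamma(aq+1)},
\]
the right-hand side becomes, up to lower-order terms, a negative coercive sum $-m_0\sum_q\lambda_q\mu^q m_{2q}/\Gamma(aq+1)$ plus a bilinear sum in which each summand carries the coefficient $\lambda_q\Theta_{k,q}C^{(a)}_{k,q}$ times the two factors $\mu^k m_{2k}/\Gamma(ak+1)$ and $\mu^{q-k}m_{2(q-k)}/\Gamma(a(q-k)+1)$.

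\emph{Step 3 (combinatorial balance and Riccati comparison).} The key estimate — to be proved with Stirling/Beta asymptotics in the Appendix — is that for $a\ge1+\frac\kappa2$ the coefficient $\lambda_q\Theta_{k,q}C^{(a)}_{k,q}$ is bounded uniformly in $1\le k\le\lfloor q/2\rfloor$, and tends to $0$ for each fixed $k$ as $q\to\infty$. The binding case is $k=1$, where $C^{(a)}_{1,q}\sim c_a\,q^{\,1-a}$ while $\lambda_q\Theta_{1,q}$ grows at most like $q^{\kappa/2}$, so the product is bounded precisely when $1-a+\frac\kappa2\le0$; larger fixed $k$ decay faster, and $k\asymp q$ is exponentially small. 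Consequently the bilinear sum is controlled by $B_a(\mathcal E_a^n)^2$ plus a bounded remainder built from finitely many low-order polynomial moments, each of which propagates in time by the classical theory. Together with the coercive sum, which is at least $\lambda_1 m_0(\mathcal E_a^n-m_0)$, this gives a Riccati-type differential inequality for $\mathcal E_a^n$ with coefficients independent of $n$ and of $\mu$. Since $\sup_n\mathcal E_a^n(0)=\mathcal E_a[f_0]\to m_0$ as the rate $\mu\downarrow0$, one fixes $\mu\le\mu_0$ small enough that $\mathcal E_a^n(0)$ lies below the unstable branch of this inequality; then $\mathcal E_a^n(t)$ stays bounded by that value for all $t\ge0$, uniformly in $n$. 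Sending $n\to\infty$ and translating back through Step~1 produces \eqref{K thm} and \eqref{B thm}.

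The main obstacle is the estimate in Step~3: one must pair sharp Gamma-function asymptotics for $C^{(a)}_{k,q}$ with careful bounds on the angular integrals $\lambda_q,\Theta_{k,q}$ — which in the non-cutoff case are finite only because of the cancellation hard-wired into the averaging lemma — and verify that the exponent bookkeeping closes exactly at $a=1+\frac\kappa2$, i.e.\ $s=\frac{4}{2+\kappa}$. The endpoint $a=1$, available only when $\kappa=0$ (so $s=2$), is special: there $C^{(1)}_{k,q}\equiv1$ supplies no decay in $q$, and the argument must instead use the smallness of the rate $\mu$ to keep $\mathcal E_a^n(0)$ below the Riccati threshold — which is why the theorem only asserts propagation for \emph{some} $\alpha\le\alpha_0$ rather than for $\alpha_0$ itself.
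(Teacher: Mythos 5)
Your plan is essentially the paper's: pass to Mittag--Leffler moments with $a=2/s$, sum the polynomial moment ODEs against $\alpha^{aq}/\Gamma(aq+1)$, use Gamma/Beta asymptotics to balance the bilinear term, and close with an ODE comparison on the partial sums. The critical exponent count in your Step~3 (the $k=1$ coefficient $\sim q^{1-a+\kappa/2}$, bounded iff $a\ge1+\kappa/2$, i.e.\ $s\le 4/(2+\kappa)$) matches the paper's computation via Lemma~\ref{Beta function}. One caveat: your description of the moment ODE in Step~2 mislocates where the $\kappa$-dependence lives. In Lemma~\ref{lemma polynomial moment ODE} the coercive coefficient $C_1 m_0$ is \emph{constant} in $q$; the gain from non-cutoff cancellation appears instead as the sequence $\varepsilon_{\kappa,q}$ multiplying the positive bilinear sum, with decay $\varepsilon_{\kappa,q}\,q^{1-\kappa/2}\to0$ (Remark~\ref{decay of b}), not as a coercivity constant $\lambda_q$ growing in $q$. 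If you really posited a single growing $\lambda_q$ multiplying both terms together with genuinely bounded $\Theta_{k,q}$, the bookkeeping would not close at the claimed threshold because of the $k(q-k)$ factor coming from $q(q-1)\binom{q-2}{k-1}=k(q-k)\binom{q}{k}$; your Step~3 implicitly absorbs that $k(q-k)$ and the $\varepsilon_{\kappa,q}$ decay into $\lambda_q\Theta_{k,q}$, at which point it agrees with the paper. Also worth noting: the paper does not run the $a=1$ endpoint through this machinery but simply cites the known Maxwellian-moment propagation of Desvillettes, whereas you sketch an independent smallness-of-$\mu$ argument (Bobylev-style) for it; that alternative is fine but would need details not given here.
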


\begin{remark}
We make several remarks about this result.
\begin{enumerate}
\item[(i)] The order $s$ of the stretched exponential moment that propagates in time depends on the singularity rate of the angular kernel. According to \eqref{order K} and \eqref{order B}, the more singular the kernel is, the smaller the $s$ is.
\item[(ii)] The Maxwellian moment $s=2$ can be reached only in the cutoff case i.e. $\kappa = 0$ for the Kac equation or $\beta = 0$ for the Boltzmann equation.
\item[(iii)] The cutoff Kac equation was studied in \cite{de93}, where propagation of moments of order $s=1$ and $s=2$ was proved. We extend this result by allowing $s\in[0,2]$, and by considering the non-cutoff kernels too.
\item[(iv)] The cutoff Boltzmann equation for Maxwell molecules was studied in \cite{bo84, bo88}, where propagation of Maxwellian moments ($s=2$) is proved. We extend this result by allowing $s\in [0,2]$.
\item[(v)] The non-cutoff Boltzmann equation for hard potentials $\gamma >0$ was studied in \cite{lumo12} where generation of stretched exponential moments of order $s=\gamma$ was proved, and \cite{algapata15} where propagation of stretched exponential moments was proved depending on the singularity rate of the angular kernel. We extend the work of \cite{algapata15} to include the case $\gamma =0$.
\end{enumerate}
\end{remark}

\section{Mittag-Leffler moments}\label{sec-ml} In this section, we recall the definition of Mittag-Leffler moments, first introduced in \cite{algapata15}. They are a generalization of stretched exponential moments, and they are convenient for the study of exponential decay properties of a function $f$. Namely, these moments are the $L^1$ norms weighted with Mittag-Leffler functions which asymptotically behave like exponentials. More precisely, a Mittag-Leffler function with parameter $a>0$ is defined by
\begin{equation*}
\mathcal{E}_a (x):=\sum_{q=0}^{\infty} \frac{x^q}{\Gamma(a q+1)}, \qquad a>0, x \in \mathbb{R}.
\end{equation*}
Note that $\mathcal{E}_1 (x)$ is simply the Maclaurin series of $e^x$, while it is well-known that for $a>0$
\begin{equation*}
\mathcal{E}_a (x) \sim e^{x^{1/a}}, \quad \text{as} \ x \rightarrow + \infty.
\end{equation*}
Therefore,
\begin{equation*}
 e^{\alpha \langle v \rangle^s} \sim \mathcal{E}_{2/s} (\alpha^{2/s}  \langle v \rangle^2) = \sum_{q=0}^{\infty} \frac{\alpha^{ \frac{2q}{s}} }{\Gamma(\frac{2}{s} q+1)} \langle v \rangle^{2q} , \quad \text{when} \ v \rightarrow + \infty.
\end{equation*}
This motivated the definition of Mittag-Leffler moment \cite{algapata15}
\begin{definition}
The Mittag-Leffler moment of a rate $\alpha>0$ and an order $s>0$ is defined via

\begin{equation*}
\mathcal{M}_{\alpha, s}(t):=\int_{\mathbb{R}^d} f(t, v)  \, \mathcal{E}_{2/s} (\alpha^{2/s}  \langle v \rangle^s)\, \mathrm{d} v = \sum_{q=0}^{\infty} \frac{\alpha^{ \frac{2q}{s}} }{\Gamma(\frac{2}{s} q+1)} m_{2q}(t)
\end{equation*}
for any $t\geq 0$.
\end{definition}
\begin{remark}\label{exp=ml}
Due to the asymptotic behavior of Mittag-Leffler functions, the finiteness of the stretched exponential moment $M_{\alpha,s}(t)$ at any time $t>0$ is equivalent to the finiteness of the corresponding Mittag-Leffler moment $\mathcal{M}_{\alpha, s}(t)$.
\end{remark}

\section{Angular averaging lemmas with cancellation}\label{sec-lem}
Before proving Theorem \ref{thm}, we provide an estimate of the angular part of the weak formulation \eqref{weak formulation K} and \eqref{weak formulation B} when the test function is a monomial $\phi(v)=\langle v \rangle^{2q}$. These bounds will be later used to derive a differential inequality for polynomial moment in Lemma \ref{lemma polynomial moment ODE}.

\begin{lemma}\label{lemma theta integral}
Let $q\geq 2$.
\begin{itemize}
\item[(a)] Kac equation: Suppose that the angular kernel of the Kac equation $b_K$ satisfies the assumption \eqref{cross section}.  Then
\begin{multline*}
\int_{-\pi}^{\pi} \left( \langle v' \rangle^{2q}  + \langle v'_* \rangle^{2q} -  \langle v \rangle^{2q}  - \langle v_* \rangle^{2q} \right) \,  b_K(|\theta|)  \, \mathrm{d} \theta
\\ \leq- \frac{C_1}{2}  \left( \langle v \rangle^{2q} + \langle v_* \rangle^{2q}  \right)  + \frac{C_1}{2}  \left( \langle v \rangle^{2} \langle v_* \rangle^{2q-2} + \langle v \rangle^{2q-2} \langle v_* \rangle^{2}  \right) \\
+  C_1 q (q-1) \varepsilon_{\kappa, q} \langle v \rangle^2 \langle v_* \rangle^2 \left( \langle v \rangle^2 + \langle v_* \rangle^2 \right)^{q-2},
\end{multline*}
where
\begin{equation}\label{C1}
C_1 = \int_{-\pi}^{\pi} \sin^2 (2\theta) \,  b_K(|\theta|) \, \mathrm{d} \theta < \infty
\end{equation}
and
\begin{equation}\label{eps k}
\varepsilon_{\kappa, q} =  \frac{2}{C_1}\int_{-\pi}^{\pi} \sin^2 (2\theta) \, b_K(|\theta|) \int_0^1 t \left( 1-  \frac{t}{4} \sin^2 2\theta \right)^{q-2} \mathrm{d} t \, \mathrm{d} \theta \leq 1.
\end{equation}

\item[(b)] Boltzmann equation for Maxwell molecules: Suppose that the angular kernel $b_B$ satisfies the assumption \eqref{Boltzmann cross section}.
\begin{multline*}
\int_{S^{d-1}} \left( \langle v' \rangle^{2q}  + \langle v'_* \rangle^{2q} -  \langle v \rangle^{2q}  - \langle v_* \rangle^{2q} \right) \,  b_B(\hat{u}\cdot \sigma)  \, \mathrm{d} \sigma
\\ \leq - C_2  \left( \langle v \rangle^{2q} + \langle v_* \rangle^{2q}  \right)  + C_2 \left( \langle v \rangle^{2} \langle v_* \rangle^{2q-2} + \langle v \rangle^{2q-2} \langle v_* \rangle^{2}  \right) \\
+ C_2 q (q-1) \varepsilon_{\beta, q} \langle v \rangle^2 \langle v_* \rangle^2 \left( \langle v \rangle^2 + \langle v_* \rangle^2 \right)^{q-2},
\end{multline*}
where
\begin{equation}\label{C2}
C_2 = |S^{d-2}|  \int_{0}^{\pi} b_B(\cos\theta) \sin^d \theta \,   \mathrm{d} \theta < \infty
\end{equation}
and
\begin{equation}\label{eps B}
\varepsilon_{\beta, q} = \frac{2}{C_2}|S^{d-2}| \int_{0}^{\pi} \sin^d (\theta) \, b_B(\cos\theta) \int_0^1 t \left( 1-  \frac{t}{2} \sin^2 \theta \right)^{q-2} \mathrm{d} t \, \mathrm{d} \theta \leq 1.
\end{equation}
\end{itemize}
\end{lemma}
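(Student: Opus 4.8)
The plan is to handle both parts by the same strategy, since the collision rules \eqref{velocities} and \eqref{B-velocities} both realize $(v',v'_*)$ as a ``rotation'' of $(v,v_*)$ in a plane, and in both cases one has the key energy identity $\langle v'\rangle^2 + \langle v'_*\rangle^2 = \langle v\rangle^2 + \langle v_*\rangle^2$. Write $E := \langle v\rangle^2 + \langle v_*\rangle^2$. For the Kac case, set $x:=\langle v'\rangle^2$; then $\langle v'_*\rangle^2 = E-x$, and $0\le x\le E$. The first step is to expand the binomial: $\langle v'\rangle^{2q} = x^q$, and to compare $x^q + (E-x)^q$ to $\langle v\rangle^{2q}+\langle v_*\rangle^{2q}$ by writing everything relative to $E$. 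The natural move is a second-order Taylor expansion: with $g(y) = y^q$ on $[0,E]$ one has $g(x) + g(E-x) - g(a) - g(E-a)$ (where $a = \langle v\rangle^2$) controlled by the concavity/convexity structure. Concretely, I would parametrize $\langle v'\rangle^2 = \langle v\rangle^2\cos^2\theta + \langle v_*\rangle^2 \sin^2\theta + (\text{cross term})$; the square of \eqref{velocities} gives $v'^2 = v^2\cos^2\theta + v_*^2\sin^2\theta - 2vv_*\sin\theta\cos\theta$, so $\langle v'\rangle^2 = 1 + v^2\cos^2\theta + v_*^2\sin^2\theta - vv_*\sin 2\theta$. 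The cross term $-vv_*\sin 2\theta$ is the source of both the gain and the $\sin^2 2\theta$ weight.

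The core computation is a Taylor expansion in the cross term. Write $\langle v'\rangle^2 = A - B$ where $A = 1 + v^2\cos^2\theta + v_*^2\sin^2\theta$ and $B = vv_*\sin 2\theta$, and similarly $\langle v'_*\rangle^2 = (E - A) + B$. Then Taylor-expand $h(B) := (A-B)^q + (E-A+B)^q$ around $B=0$ to second order: $h(B) = h(0) + h'(0)B + \tfrac12 h''(\xi)B^2$ for some $\xi$ between $0$ and $B$. The zeroth-order term $h(0) = A^q + (E-A)^q$; after integrating in $\theta$ and using that $\int b_K(|\theta|)\sin^2\theta\,d\theta$ is finite, the deviation of $A$ from $\langle v\rangle^2$ is of size $\lesssim \sin^2\theta\,(\langle v\rangle^2 + \langle v_*\rangle^2)$, and convexity of $y\mapsto y^q$ combined with the cancellation $\int (\text{first-order part}) = 0$ (by the substitution $\theta\mapsto -\theta$ or the evenness of $b_K$) produces the negative term $-\tfrac{C_1}{2}(\langle v\rangle^{2q}+\langle v_*\rangle^{2q})$ together with the ``mixed'' term $+\tfrac{C_1}{2}(\langle v\rangle^2\langle v_*\rangle^{2q-2} + \langle v\rangle^{2q-2}\langle v_*\rangle^2)$. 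The first-order term $h'(0)B = q(E-A)^{q-1}B - qA^{q-1}B$ integrates to zero in $\theta$ because $B = vv_*\sin 2\theta$ is odd under $\theta\mapsto-\theta$ while $A$, $E-A$ are even; this is the ``cancellation'' in the lemma's title. The second-order remainder $\tfrac12 h''(\xi)B^2 = \tfrac12 q(q-1)\big[(A-\xi)^{q-2} + (E-A+\xi)^{q-2}\big]B^2$ is nonnegative; bounding $(A-\xi)^{q-2} + (E-A+\xi)^{q-2} \le (1 - \tfrac{t}{4}\sin^2 2\theta)^{q-2}$-type factors after rescaling $\xi = tB$, $t\in[0,1]$, and using $|B|^2 = v^2 v_*^2\sin^2 2\theta \le \langle v\rangle^2\langle v_*\rangle^2\sin^2 2\theta$, produces exactly the last term with $C_1 q(q-1)\varepsilon_{\kappa,q}\langle v\rangle^2\langle v_*\rangle^2(\langle v\rangle^2+\langle v_*\rangle^2)^{q-2}$, where $\varepsilon_{\kappa,q}$ absorbs the $\theta$- and $t$-integrals and is $\le 1$ because the integrand $(1 - \tfrac t4\sin^2 2\theta)^{q-2}\le 1$ and $\int_0^1 t\,dt = \tfrac12$, matched against the normalization $\tfrac{2}{C_1}$ and $C_1 = \int \sin^2 2\theta\, b_K$. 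One checks $C_1 < \infty$ using $\sin^2 2\theta = 4\sin^2\theta\cos^2\theta \le 4\sin^2\theta$ and assumption \eqref{cross section} with $\kappa \le 2$.

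For part (b), the only change is bookkeeping: on $S^{d-1}$ with $\hat u\cdot\sigma = \cos\theta$, one has from \eqref{B-velocities} that $\langle v'\rangle^2 = \tfrac14|v+v_*|^2 + \tfrac12|v+v_*||v-v_*|\,\hat m\cdot\sigma + \tfrac14|v-v_*|^2 + 1$ with the cross term carrying a factor that integrates to $\sin^2\theta$-type weights; the reduction of the $\sigma$-integral to a $\theta$-integral contributes the $|S^{d-2}|$ and an extra $\sin^{d-2}\theta$, giving $C_2 = |S^{d-2}|\int b_B(\cos\theta)\sin^d\theta\,d\theta$, finite by \eqref{Boltzmann cross section} with $\beta\le 2$ since $\sin^2\theta\le 1$. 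The same Taylor-with-cancellation argument then yields the stated inequality with $\varepsilon_{\beta,q}$ in place of $\varepsilon_{\kappa,q}$, the factor $\tfrac t2$ instead of $\tfrac t4$ arising from the different normalization of the cross term in the Boltzmann collision rule. I expect the main obstacle to be the precise identification of the remainder term: one must track which intermediate point $\xi$ appears in $h''$, verify that $(A-\xi)^{q-2}+(E-A+\xi)^{q-2}$ can honestly be dominated after rescaling by the clean factor $(1-\tfrac t4\sin^2 2\theta)^{q-2}$ uniformly in $v, v_*$ (this uses $A \le E$, $\xi \ge 0$, and that the worst case is $\langle v\rangle = \langle v_*\rangle$), and confirm the inequality $\varepsilon_{\kappa,q}\le 1$ does not secretly require $q$ large. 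Everything else is elementary calculus and symmetrization.
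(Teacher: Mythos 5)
Your overall route coincides with the paper's: write $\langle v'\rangle^2 = E(\theta) - 2vv_*\sin\theta\cos\theta$ (your $A$ equals the paper's $E(\theta)$, your $B$ equals $2vv_*\sin\theta\cos\theta$), Taylor-expand $y\mapsto y^q$ in the cross term, kill the first-order term by oddness in $\theta$, and bound the zeroth-order and second-order pieces separately. However there are two concrete gaps in the way you treat the two remaining pieces.

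First, the Taylor remainder must be taken in integral (Cauchy) form,
\begin{equation*}
(E(\theta)+h)^q \;=\; E(\theta)^q + q\,E(\theta)^{q-1}h + h^2\int_0^1 (1-t)\,q(q-1)\,\bigl(E(\theta)+th\bigr)^{q-2}\,\mathrm{d} t,
\end{equation*}
not in the Lagrange form $\tfrac12 h''(\xi)B^2$ that you write. The difference is not cosmetic: with the Lagrange form you get a single uncontrolled intermediate point $\xi=\tau B$, $\tau\in[0,1]$, and after the pointwise bound $E(\theta)-2\tau v v_*\sin\theta\cos\theta \le (\langle v\rangle^2+\langle v_*\rangle^2)\bigl(1-(1-\tau)\tfrac{\sin^2 2\theta}{4}\bigr)$ the worst case $\tau=1$ kills the gain entirely, so you would only obtain a $q$-independent $\varepsilon$-factor bounded by $1$. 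With the integral form, the $\int_0^1(1-t)\ldots\,\mathrm{d} t$ (after the substitution $t\mapsto 1-t$) is exactly what produces the $\int_0^1 t\,(1-\tfrac t4\sin^2 2\theta)^{q-2}\mathrm{d} t$ structure of $\varepsilon_{\kappa,q}$, and that structure is what gives the decay $\varepsilon_{\kappa,q}\,q^{1-\kappa/2}\to 0$ (Remark \ref{decay of b}) on which the rest of the paper rests. You do seem to want the $t$-integral (you mention ``$\varepsilon_{\kappa,q}$ absorbs the $\theta$- and $t$-integrals'' and compute $\int_0^1 t\,\mathrm{d} t=\tfrac12$), so this is likely a slip of notation, but it should be fixed: the proof really does need the integral remainder.

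Second, the estimate of the zeroth-order piece
\begin{equation*}
I_1 = \int_{-\pi}^{\pi}\bigl(E(\theta)^q + E(\pi-\theta)^q - \langle v\rangle^{2q}-\langle v_*\rangle^{2q}\bigr)\,b_K(|\theta|)\,\mathrm{d}\theta
\end{equation*}
is not a consequence of ``convexity of $y\mapsto y^q$'' alone, and has nothing to do with the first-order cancellation. Plain convexity only gives $E(\theta)^q + E(\pi-\theta)^q \le \langle v\rangle^{2q}+\langle v_*\rangle^{2q}$, i.e.\ $I_1\le 0$, which is not the negative term with the explicit coefficient $\tfrac{C_1}{2}$ that the lemma asserts. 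What is needed is the \emph{quantitative} convexity inequality of Lemma \ref{lemma convex binomial expansion estimate}: for $a,b\ge 0$, $t\in[0,1]$, $p\ge 2$,
\begin{equation*}
(ta+(1-t)b)^p + ((1-t)a+tb)^p - a^p - b^p \le -2t(1-t)(a^p+b^p) + 2t(1-t)(ab^{p-1}+a^{p-1}b),
\end{equation*}
applied with $t=\cos^2\theta$, $a=\langle v\rangle^2$, $b=\langle v_*\rangle^2$, $p=q$. You should name and use this lemma explicitly; otherwise the negative term, which is the source of the moment control, is unjustified. The rest of your proposal (the oddness argument for the first-order term, the Cauchy--Schwarz bound for the inner brackets, $v^2v_*^2\le\langle v\rangle^2\langle v_*\rangle^2$, the finiteness of $C_1$ and $C_2$, and the bookkeeping for the spherical integral in part (b)) is consistent with the paper.
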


\begin{remark}\label{decay of b}
The sequences $\{\varepsilon_{\kappa, q}\}_q$ and $\{\varepsilon_{\beta, q}\}_q$  are decreasing to zero with a certain decay rate depending on the angular singularity rate $\kappa\in [0,2]$ in the case of the Kac equation and $\beta\in[0,2]$ in the case of the Boltzmann equation, \cite{lumo12},
\begin{align} \label{decay rate}
&\varepsilon_{\kappa, q} \;q^{1-\frac{\kappa}{2}} \rightarrow 0, \quad \mbox{as} \;\; q\rightarrow \infty, \\
&\varepsilon_{\beta, q} \;q^{1-\frac{\beta}{2}} \rightarrow 0, \quad \mbox{as} \;\; q\rightarrow \infty.
\end{align}
\end{remark}

\begin{proof}[Proof of Lemma \ref{lemma theta integral}]
The proof of part (b) can be found in \cite[Lemma 2.3]{algapata15}. Thus, here we provide only the proof of part (a).

If $E(\theta)$ denotes the following convex combination of particle energies
\begin{equation}\label{E(theta)}
\begin{split}
E(\theta) &:= \langle v \rangle^2 \cos^2\theta +  \langle v_* \rangle^{2} \sin^2\theta, \end{split}
\end{equation}
then, using the collision rules \eqref{velocities}, we obtain
\begin{equation}\label{velocities with E(theta)}
\begin{split}
\langle v' \rangle^{2} &= E(\theta) - 2 v v_* \sin \theta \cos \theta,\\
\langle v'_* \rangle^{2} &= E(\pi - \theta) + 2 v v_* \sin \theta \cos \theta.
\end{split}
\end{equation}
Taylor expansion of $\langle v' \rangle^{2q}$ around $E(\theta)$ up to the second order yields
\begin{align*}
\langle v' \rangle^{2q} &= \left( E(\theta) - 2 v v_* \sin \theta \cos \theta  \right)^q\\
                        &= E(\theta)^q - 2 q E(\theta)^{q-1} v v_* \sin \theta \cos \theta \\ & \qquad + 4 q (q-1) v^2 v_*^2 \sin^2\theta \cos^2\theta \int_0^1 (1-t) \left( E(\theta) - 2 \,t\, v v_* \sin \theta \cos \theta \right)^{q-2} \mathrm{d} t.
\end{align*}
Analogous expression can be written for $\langle v'_*\rangle$ as well.

The first order term in the above expression is an odd function in $\theta$, which nullifies by integration over the even domain $[-\pi, \pi]$. Therefore, we can write
\begin{equation*}
\int_{-\pi}^{\pi} \left( \langle v' \rangle^{2q}  + \langle v'_* \rangle^{2q} -  \langle v \rangle^{2q}  - \langle v_* \rangle^{2q} \right) \,  b_K(|\theta|)  \, \mathrm{d} \theta
= I_1 + I_2,
\end{equation*}
where
\begin{align*}
I_1 &= \int_{-\pi}^{\pi} \left( E(\theta)^q + E(\pi-\theta)^q - \langle v \rangle^{2q} -  \langle v_* \rangle^{2q} \right) b_K(|\theta|) \, \mathrm{d} \theta,\\
I_2 &=4 q (q-1) v^2 v_*^2 \int_{-\pi}^{\pi} \sin^2\theta \cos^2\theta  \, b_K(|\theta|) \\ & \qquad \times \int_0^1 (1-t) \left[ \left( E(\theta) - 2 \,t\, v v_* \sin \theta \cos \theta \right)^{q-2} + \left( E(\pi-\theta) + 2 \,t\, v v_* \sin \theta \cos \theta \right)^{q-2} \right]\mathrm{d} t \, \mathrm{d} \theta.
\end{align*}

We now proceed to estimate the terms $I_1$ and $I_2$ separately.

\subsubsection*{Term $I_1$}
 The term $I_1$ is estimated by an application of Lemma \ref{lemma convex binomial expansion estimate}. Indeed, for $t=\cos^2 \theta$, so that $1-t=\sin^2 \theta$, and $a=\langle v \rangle^2$, $b=\langle v_* \rangle^2$, recalling \eqref{E(theta)} we obtain:
\begin{align*}
I_1 & \leq  - 2 \int_{-\pi}^{\pi}  \cos^2 \theta \sin^2 \theta \left( \langle v \rangle^{2q} + \langle v_* \rangle^{2q}  \right) b_K(|\theta|) \, \mathrm{d} \theta \\
&  \quad \qquad +  2 \int_{-\pi}^{\pi}  \cos^2 \theta \sin^2 \theta \left( \langle v \rangle^{2} \langle v_* \rangle^{2q-2} + \langle v \rangle^{2q-2} \langle v_* \rangle^{2}  \right) b_K(|\theta|) \, \mathrm{d} \theta \\
& = - \frac{1}{2} \left( \langle v \rangle^{2q} + \langle v_* \rangle^{2q}  \right) \int_{-\pi}^{\pi} \sin^2 2\theta \,  b_K(|\theta|) \, \mathrm{d} \theta \\
&  \quad \qquad + \frac{1}{2} \left( \langle v \rangle^{2} \langle v_* \rangle^{2q-2} + \langle v \rangle^{2q-2} \langle v_* \rangle^{2}  \right) \int_{-\pi}^{\pi} \sin^2 2\theta \,  b_K(|\theta|) \, \mathrm{d} \theta.
\end{align*}
 Therefore, recalling \eqref{C1}, we have
\begin{equation*}
I_1 \leq - \frac{C_1}{2} \left( \langle v \rangle^{2q} + \langle v_* \rangle^{2q}  \right)  + \frac{C_1}{2}  \left( \langle v \rangle^{2} \langle v_* \rangle^{2q-2} + \langle v \rangle^{2q-2} \langle v_* \rangle^{2}  \right).
\end{equation*}
\subsubsection*{Term $I_2$} By Cauchy-Schwartz inequality, $- 2 \,t\, v v_* \sin \theta \cos \theta  \leq t \, E(\pi - \theta)$.
 Thus,
\begin{align*}
E(\theta) - 2 \,t\, v v_* \sin \theta \cos \theta
&\leq E(\theta) + t \, E(\pi - \theta)\\
&= \langle v \rangle^2 + \langle v_* \rangle^2 - (1-t) E(\pi - \theta) \\
& \leq \left( \langle v \rangle^2 + \langle v_* \rangle^2 \right)\left( 1- (1-t) \frac{\sin^2 2\theta}{4} \right),
\end{align*}
where the last inequality follows from
\begin{align*}
E(\pi - \theta)
&= \langle v \rangle^2 \sin^2\theta + \langle v_* \rangle^2 \cos^2\theta \\
& \geq \left( \langle v \rangle^2 + \langle v_* \rangle^2 \right) \min\left\{\sin^2\theta, \cos^2\theta\right\} \\
& \geq \left( \langle v \rangle^2 + \langle v_* \rangle^2 \right)\frac{\sin^2 2\theta}{4}.
\end{align*}
The same estimate holds for $E(\pi - \theta) + 2t v v_* \sin\theta \cos\theta$. Therefore, recalling \eqref{eps k} the definition of $\varepsilon_{\kappa, q}$, we have
\begin{equation*}
I_2 \leq 2 q (q-1) v^2 v_*^2 \left( \langle v \rangle^2 + \langle v_* \rangle^2 \right)^{q-2} \varepsilon_{\kappa, q}.
\end{equation*}
Adding estimates for terms $I_1$ and $I_2$ completes the proof of lemma.
\end{proof}

\section{Bounds on polynomial moments} \label{sec-pol}

\begin{lemma}\label{lemma polynomial moment ODE}
With assumptions and notations of Lemma \ref{lemma theta integral}, we have the following differential inequalities for a polynomial moment $m_{2q}$
\begin{itemize}
\item [(a)] In the case of the Kac equation,
\begin{equation}\label{polynomial moment ODE K}
m'_{2q} \leq - C_1 m_0 m_{2q} + C_1 m_2 m_{2q-2} + C_1 q (q-1) \, \varepsilon_{\kappa, q} \, \sum_{k=1}^{\lfloor \frac{q+1}{2} \rfloor} \left( \begin{matrix} q-2 \\ k -1\end{matrix} \right) m_{2k}m_{2q-2k}.
\end{equation}
\item [(b)] In the case of the Boltzmann equation for Maxwell molecules,
\begin{equation}\label{polynomial moment ODE B}
m'_{2q} \leq - C_2 m_0 m_{2q} + C_2 m_2 m_{2q-2} + C_2 \, q (q-1) \, \varepsilon_{\beta, q} \, \sum_{k=1}^{\lfloor \frac{q+1}{2} \rfloor} \left( \begin{matrix} q-2 \\ k -1\end{matrix} \right) m_{2k}m_{2q-2k}.
\end{equation}
\end{itemize}
\end{lemma}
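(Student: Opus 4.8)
The plan is to test the weak formulation of the collision operator against the monomial $\phi(v) = \langle v \rangle^{2q}$ and then feed in the pointwise angular estimate of Lemma \ref{lemma theta integral}. For part (a) I would begin from \eqref{weak formulation K} with $\phi = \langle v \rangle^{2q}$, which gives
\begin{equation*}
m'_{2q}(t) = \frac12 \int_{\mathbb{R}} \int_{\mathbb{R}} f f_* \left( \int_{-\pi}^{\pi} \big( \langle v'\rangle^{2q} + \langle v'_*\rangle^{2q} - \langle v\rangle^{2q} - \langle v_*\rangle^{2q} \big)\, b_K(|\theta|)\, \mathrm{d}\theta \right) \mathrm{d}v_*\,\mathrm{d}v ,
\end{equation*}
and then, since $q\ge 2$ and $f f_* \ge 0$, bound the inner $\theta$-integral by the right-hand side of Lemma \ref{lemma theta integral}(a). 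Integrating that bound against $f f_*\,\mathrm{d}v\,\mathrm{d}v_*$ and using $\int f\,\mathrm{d}v = m_0$, $\int f \langle v\rangle^2\,\mathrm{d}v = m_2$, the first term of the Lemma contributes a negative multiple of $m_0 m_{2q}$ and the second a positive multiple of $m_2 m_{2q-2}$; tracking the numerical constants (including the $\tfrac12$ coming from \eqref{weak formulation K}) reproduces the first two terms of \eqref{polynomial moment ODE K}.

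The only remaining step is to convert the last (``error'') term of Lemma \ref{lemma theta integral}(a), namely $C_1 q(q-1)\varepsilon_{\kappa,q}\,\langle v\rangle^2\langle v_*\rangle^2(\langle v\rangle^2+\langle v_*\rangle^2)^{q-2}$, into the binomial sum of moment products. I would expand $(\langle v\rangle^2 + \langle v_*\rangle^2)^{q-2} = \sum_{j=0}^{q-2}\binom{q-2}{j}\langle v\rangle^{2j}\langle v_*\rangle^{2(q-2-j)}$, integrate term by term against $f f_*$ to get $\sum_{j=0}^{q-2}\binom{q-2}{j} m_{2j+2}\,m_{2q-2j-2}$, re-index by $k=j+1$ to rewrite this as $\sum_{k=1}^{q-1}\binom{q-2}{k-1} m_{2k}\,m_{2(q-k)}$, and then use the symmetry $\binom{q-2}{k-1}=\binom{q-2}{q-k-1}$ of the summand under $k\mapsto q-k$ to fold the sum about its midpoint (the middle term, if $q$ is even, occurs at $k=q/2$), bounding it by $2\sum_{k=1}^{\lfloor (q+1)/2\rfloor}\binom{q-2}{k-1} m_{2k}\,m_{2(q-k)}$; combined with the $\tfrac12$ from the weak formulation, this is exactly the last term of \eqref{polynomial moment ODE K}. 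Part (b) follows verbatim from \eqref{weak formulation B} with $\gamma = 0$ and Lemma \ref{lemma theta integral}(b), with $C_2,\varepsilon_{\beta,q}$ in place of $C_1,\varepsilon_{\kappa,q}$.

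The point that needs genuine care — and which I expect to be the main obstacle — is that $\langle v \rangle^{2q}$ is not an admissible test function in Definition \ref{def-weak sol for Kac} or Definition \ref{defn-weak} (it is neither bounded nor of class $W^{2,\infty}$), and a priori there is no guarantee that $m_{2q}(t)$ is finite for $t>0$. I would handle this in the now-standard way: run the computation with bounded truncated test functions $\phi_R(v) = \langle v\rangle^{2q}\,\chi(\langle v\rangle / R)$, where $\chi$ is a fixed smooth cutoff with $0\le\chi\le 1$ and $\chi\equiv 1$ on $[0,1]$, which are admissible for each $R$; estimate the gain contribution using $\phi_R \le \langle v\rangle^{2q}$ together with Lemma \ref{lemma theta integral}, keep control of the loss contribution, and pass to the limit $R\to\infty$ by monotone convergence. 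Finiteness of $m_{2q}(t)$ for all $t\ge 0$ then follows by induction on $q$ — the right-hand side of the differential inequality involves only moments of order $\le 2q$, all finite at $t=0$ since $f_0$ has a finite stretched exponential moment — together with a comparison/Gr\"onwall argument. This limiting step is where the analytic subtlety lies; everything else is the elementary algebra and combinatorics described above.
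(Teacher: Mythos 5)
Your proposal is correct and follows essentially the same route as the paper: test the weak formulation against $\phi(v)=\langle v\rangle^{2q}$, invoke the angular averaging Lemma~\ref{lemma theta integral}, expand $\left(\langle v\rangle^2+\langle v_*\rangle^2\right)^{q-2}$ binomially, integrate against $f f_*$, and fold the resulting sum of moment products about its midpoint. The only organizational difference is in where the folding happens: the paper applies the pointwise inequality of Lemma~\ref{lemma polynomial inequality} to $\left(\langle v\rangle^2+\langle v_*\rangle^2\right)^{q-2}$ \emph{before} integrating, so that after testing against $f f_*$ the sum already runs only to $\lfloor(q+1)/2\rfloor$, whereas you expand fully, integrate to get $\sum_{k=1}^{q-1}\binom{q-2}{k-1}m_{2k}m_{2(q-k)}$, and then fold by the symmetry $k\mapsto q-k$; these are interchangeable and yield the same bound. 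Your accounting of the numerical constants differs from the paper's by a harmless factor of $2$ (the paper appears to absorb the $\tfrac12$ of \eqref{weak formulation K} somewhere in the bookkeeping), but both give the required structure $m_{2q}'\leq -c\,m_0 m_{2q}+c\,m_2 m_{2q-2}+c\,q(q-1)\varepsilon_{\kappa,q}\sum(\cdots)$ and neither normalization causes any trouble downstream. Your closing remark about $\langle v\rangle^{2q}$ not being an admissible test function, and the need for a truncation-and-limit argument plus an induction on $q$ to ensure finiteness, identifies a genuine gap that the paper leaves implicit; your proposed fix (truncated cutoffs $\phi_R$, monotone convergence, Gr\"onwall, induction on moment order, using finiteness at $t=0$ from the stretched exponential hypothesis) is the standard and correct way to close it.
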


\begin{proof}
Multiplying the Kac equation \eqref{Kac} with $\langle v \rangle^{2q}$ and integrating with respect to $v$, we obtain an equation for the polynomial moment $m_{2q}$
\begin{equation*}
m'_{2q} = \int_{\mathbb{R}} \langle v \rangle^{2q} \,  K(f,f)(t,v)  \, \mathrm{d} v.
\end{equation*}
Using the weak formulation \eqref{weak formulation K} one has
\begin{equation}
m'_{2q} = \int_{\mathbb{R}} \int_{\mathbb{R}} f f_*   \int_{-\pi}^{\pi} \left( \langle v' \rangle^{2q}  + \langle v'_* \rangle^{2q} -  \langle v \rangle^{2q}  - \langle v_* \rangle^{2q} \right) \, b_K(|\theta|)  \, \mathrm{d} \theta \, \mathrm{d} v_* \, \mathrm{d} v.
\end{equation}
Applying  Lemma \ref{lemma theta integral} and Lemma \ref{lemma polynomial inequality} yields
\begin{align*}
m'_{2q} &\leq \int_{\mathbb{R}} \int_{\mathbb{R}} f f_* \left[  - \frac{C_1}{2}  \left( \langle v \rangle^{2q} + \langle v_* \rangle^{2q}  \right)  + \frac{C_1}{2}  \left( \langle v \rangle^{2} \langle v_* \rangle^{2q-2} + \langle v \rangle^{2q-2} \langle v_* \rangle^{2}  \right) \right. \\ & \left.
  \hspace*{2.5cm} + \frac{C_1}{2}q (q-1) \, \varepsilon_{\kappa, q} \, v^2 v_*^2 \left( \langle v \rangle^2 + \langle v_* \rangle^2 \right)^{q-2} \right] \mathrm{d} v_* \, \mathrm{d} v\\
& \leq \int_{\mathbb{R}} \int_{\mathbb{R}} f f_* \left[  - \frac{C_1}{2}  \left( \langle v \rangle^{2q} + \langle v_* \rangle^{2q}  \right)  + \frac{C_1}{2}  \left( \langle v \rangle^{2} \langle v_* \rangle^{2q-2} + \langle v \rangle^{2q-2} \langle v_* \rangle^{2}  \right) \right. \\ &
  \hspace*{2.5cm} + \frac{C_1}{2} q (q-1) \, \varepsilon_{\kappa, q}  \, \langle v \rangle^2 \langle v_* \rangle^2  \\ & \hspace*{2cm}\left. \times \left( \sum_{k=0}^{k_{q-2}}  \left( \begin{matrix} q-2 \\ k \end{matrix} \right)  \left( \langle v \rangle^{2k} \langle v_* \rangle^{2 (q-2-k)} + \langle v \rangle^{2(q-2-k)}\langle v_* \rangle^{2k} \right) \right) \right] \mathrm{d} v_* \, \mathrm{d} v\\
&=- C_1 m_0 m_{2q} + C_1 m_2 m_{2q-2} +  C_1 q (q-1) \, \varepsilon_{\kappa, q} \, \sum_{k=0}^{k_{q-2}} \left( \begin{matrix} q-2 \\ k \end{matrix} \right) m_{2k+2}m_{2q-2k-2}.
\end{align*}
It remains to change index $k$ in the sum and the part (a) is proven. The proof of the part (b) can be done in an analogous way.
\end{proof}

\begin{lemma}[Propagation of polynomial moments ]\label{poly propagation}
Suppose $f$ is a weak solution to the Cauchy problem of either Kac equation \eqref{Cauchy1} or the Boltzmann equation for Maxwell molecules \eqref{Boltzmann Cauchy}. Then for every $q \geq 0$, we have
\begin{align}
m_{2q}(0) < \infty \qquad \Rightarrow \qquad m_{2q}(t) \leq C_q^*,
\end{align}
where the constant $C^*_q>0$, is uniform in time, and depends on  $q$ and the first $q$ moments of the initial data.
\end{lemma}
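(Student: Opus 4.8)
The plan is to prove the claim by induction on $q$, using the differential inequality from Lemma~\ref{lemma polynomial moment ODE} (either \eqref{polynomial moment ODE K} or \eqref{polynomial moment ODE B}; I write the argument for the Kac case, the Boltzmann case being identical with $C_1, \kappa$ replaced by $C_2, \beta$). The base cases $q=0$ and $q=1$ are free: $m_0$ is conserved by \eqref{CL}, and $m_2$ is conserved (or at worst non-increasing) by the energy identity. So fix $q\ge 2$ and assume inductively that $m_{2k}(t)\le C^*_{2k}$ uniformly in time for all $k\le q-1$. The key structural feature of the right-hand side of \eqref{polynomial moment ODE K} is that the only appearance of the top-order moment $m_{2q}$ is in the term $-C_1 m_0 m_{2q}$, which has a favorable sign; every other term involves only strictly lower moments $m_{2k}m_{2q-2k}$ with $1\le k\le \lfloor (q+1)/2\rfloor$, hence $2q-2k \le 2q-2 < 2q$, and these are bounded by the inductive hypothesis.

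First I would collect the lower-order terms into a single constant. By the inductive hypothesis and the conservation of $m_0$ and $m_2$, set
\[
A_q := C_1 m_2 C^*_{2q-2} \;+\; C_1 q(q-1)\,\varepsilon_{\kappa,q}\sum_{k=1}^{\lfloor (q+1)/2\rfloor}\binom{q-2}{k-1} C^*_{2k}\,C^*_{2q-2k},
\]
which is a finite constant depending only on $q$ and the first $q$ moments of the initial data. Then \eqref{polynomial moment ODE K} reads
\[
m'_{2q}(t) \;\le\; -\,C_1 m_0\, m_{2q}(t) \;+\; A_q.
\]
This is a linear differential inequality with a strictly negative coefficient $-C_1 m_0 < 0$ on the linear term (note $m_0 > 0$ since $f_0\not\equiv 0$, and $C_1 > 0$ provided $b_K$ is not a.e.\ zero). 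A standard Gr\"onwall-type comparison then gives, for all $t\ge 0$,
\[
m_{2q}(t) \;\le\; e^{-C_1 m_0 t}\, m_{2q}(0) \;+\; \frac{A_q}{C_1 m_0}\bigl(1 - e^{-C_1 m_0 t}\bigr)
\;\le\; \max\!\Bigl\{\, m_{2q}(0),\ \frac{A_q}{C_1 m_0}\,\Bigr\} \;=:\; C^*_{2q},
\]
which is finite precisely because $m_{2q}(0)<\infty$ is assumed. This closes the induction and proves the lemma.

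The main obstacle — really the only point requiring care rather than routine bookkeeping — is the justification that one may legitimately run a Gr\"onwall argument on $m_{2q}$ at the level of \emph{weak} solutions, whose time-regularity is only $L^\infty_t L^1_{2}$ (plus the distributional formulation). One cannot simply assume $t\mapsto m_{2q}(t)$ is differentiable with derivative given by the weak form tested against $\langle v\rangle^{2q}$, since $\langle v\rangle^{2q}$ is not an admissible bounded test function. The standard remedy, which I would invoke, is a truncation-and-limit argument: test the weak formulation against the bounded functions $\phi_N(v) = \langle v\rangle^{2q}\chi(v/N)$ (or $\min\{\langle v\rangle^{2q}, N\}$), establish the differential inequality for the truncated moments $m_{2q}^N(t)$ with constants uniform in $N$ — here the angular averaging Lemma~\ref{lemma theta integral} and the monotone-convergence passage in $N$ do the work, since the favorable sign of the $-C_1 m_0 m_{2q}$ term survives truncation — obtain the uniform bound $C^*_{2q}$ independent of $N$, and then let $N\to\infty$ by monotone convergence to conclude the same bound for $m_{2q}$ itself. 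This is by now classical in the moment literature (e.g.\ the analogous steps in \cite{alcagamo13, algapata15}), so I would state it and refer to those treatments rather than reproducing the full $\varepsilon$-$N$ details.
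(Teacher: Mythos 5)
Your argument is correct for integer $q$ and is essentially the same as the paper's: both exploit the favorable damping $-C_1 m_0\, m_{2q}$ and a comparison principle, reducing the cross-term sum to lower-order moments. The one bookkeeping difference is that the paper compresses $\sum_k \binom{q-2}{k-1} m_{2k}m_{2q-2k}$ into a single term $2^{q-2}\, m_2\, m_{2q-2}$ via the moment-product monotonicity \eqref{monotonicity of moments}, and then iterates the resulting bound $m_{2q}(t)\le\max\bigl\{m_{2q}(0),\, C_q\, m_{2q-2}(t)/(C_1 m_0(0))\bigr\}$ down to $m_2$, whereas you appeal to an explicit inductive hypothesis to bound each product $m_{2k}m_{2q-2k}$ directly by constants; the two routes are equivalent in content.

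There is, however, a small gap you should close. The lemma asserts the bound for \emph{every} real $q\ge 0$, but your induction only produces bounds on even-integer moments $m_{2q}$, $q\in\mathbb{N}$. The paper handles the remaining orders by interpolation: for $2q-2\le p\le 2q$, $m_p \le m_{2q-2}^{(2q-p)/2}\, m_{2q}^{(p-2q+2)/2}$. You should add this interpolation step, or else restrict your stated conclusion to integer $q$. Your closing paragraph on the truncation-and-limit justification for testing against $\langle v\rangle^{2q}$ at the level of weak solutions is a correct and welcome observation that the paper leaves implicit, and citing \cite{alcagamo13, algapata15} for it is the right call.
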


\begin{proof}
Applying the inequality \eqref{monotonicity of moments} to the differential inequality \eqref{polynomial moment ODE K} yields
\begin{align}\label{m'<m}
m_{2q}' \leq -C_1 m_0 m_{2q} + C_q m_{2q-2},
\end{align}
where $C_q = C_1 m_2(0) + C_1 q(q-1) \, 2^{q-2}$.
Therefore,
\begin{align}
m_{2q}(t) \leq \max \left\{m_{2q}(0), \, \frac{C_q }{C_1 m_0(0)} m_{2q-2}(t) \right\}.
\end{align}
Applying this inequality inductively, for an integer $q\in \mathbb{N}$, we have
\begin{align*}
m_{2q}(t)
& \leq \max\left\{ m_{2q}(0), \, \frac{C_q}{C_1 m_0(0)} m_{2q-2}(0), \, \frac{C_q }{C_1 m_0(0)} \frac{C_{q-1}}{C_1 m_0(0)} m_{2q-4}(t)\right\}\\
&....\\
&\leq \max\left\{ m_{2q}(0), \, \frac{C_q  m_{2q-2}(0)}{C_1 m_0(0)}, \, \frac{C_q C_{q-1}m_{2q-4}(0)}{(C_1 m_0(0))^2}  , \dots, \, \frac{C_q C_{q-1} \dots C_{2}m_{2}(t)}{ (C_1 m_0(0))^{q-1}}\right\}\\
&\leq \max\left\{ m_{2q}(0), \, \frac{C_q  m_{2q-2}(0)}{C_1 m_0(0)}, \, \frac{C_q C_{q-1}m_{2q-4}(0)}{(C_1 m_0(0))^2}  , \dots, \, \frac{C_q C_{q-1} \dots C_{2}m_{2}(0)}{ (C_1 m_0(0))^{q-1}}\right\}.
\end{align*}
Therefore, every even moment is bounded uniformly in time.

Moments whose order is not an even integer can be interpolated by even moments. For example, if $0<2q -2 \leq p \leq 2q$, then
$$m_p \leq m_{2q-2}^{\frac{2q-p}{2}} \, m_{2q}^{\frac{p-2q+2}{2}}. $$
Hence, polynomial moments of non-even orders are bounded uniformly in time as well.

\end{proof}

\begin{remark}\label{derivatives}
 We also note that derivatives of polynomial moments are uniformly bounded in time.  Namely, applying the inequality \eqref{monotonicity of moments} to the differential inequality \eqref{polynomial moment ODE K} yields
\begin{align}\label{m'<m}
m_{2q}'(t) \leq  C_q m_{2q-2}(t),
\end{align}
where $C_q = C_1 m_2(0) + 4q(q-1)\, \varepsilon_{\kappa, q} \, 2^{q-2}$. Lemma \ref{poly propagation} implies that $m_{2q-2}(t)$ is bounded uniformly in time by a constant $C^*_{q-1}$. Thus,
\begin{align}\label{m'}
m_{2q}'(t)  \leq C_q C^*_{q-1}.
\end{align}
\end{remark}

\section{Proof of Theorem \ref{thm}} \label{sec-pf1}
\begin{proof}[Proof of Theorem \ref{thm} (a)]  Recall from Remark \ref{exp=ml} that finiteness of the stretched exponential moment $M_{\alpha,s}(t)$ is equivalent to finiteness of the Mittag-Leffler moment of the same rate and order. Therefore, we set out to prove finiteness of Mittag-Leffler moment of order $s$ and rate $\alpha$ that will be determined later:
\begin{equation}\label{ML a}
\mathcal{M}_{\alpha,\frac{2}{a}}(t)= \sum_{q=0}^{\infty} \frac{\alpha^{a q} }{\Gamma( a q+1)} m_{2q}(t),
\end{equation}
where
$$ a = \frac{2}{s}>1.$$
The case $a=1$ corresponds to $s=2$ i.e. Maxwellian moments. Propagation of such moments can be esablished according to \eqref{order K} only in the cutoff case $\kappa=0$. This result (propagation of Maxwellian moments in the cutoff case) was alrady established in \cite{de93}. Thus, we here focus on the case when $a>1$.

The goal is to prove that partial sums of \eqref{ML a}
\begin{equation*}
E^n(t) := \sum_{q=0}^{n} \frac{\alpha^{a q} }{\Gamma( a q+1)} m_{2q}(t).
\end{equation*}
 are bounded uniformly in time and $n$.

From the differential inequality for polynomial moments \eqref{polynomial moment ODE K}, and by denoting
\begin{multline*}
S_0 = \sum_{q=0}^{q_0-1} \frac{ m'_{2q} \, \alpha^{a q} }{\Gamma( a q+1)},  \quad S_1 = \sum_{q=q_0}^{n} \frac{ m_{2q} \, \alpha^{a q} }{\Gamma( a q+1)}, \quad S_2 = \sum_{q=q_0}^{n} \frac{ m_{2q-2} \, \alpha^{a q} }{\Gamma( a q+1)}, \\ S_3= \sum_{q=q_0}^{n}  \frac{ q(q-1) \, \varepsilon_{\kappa, q} \, \alpha^{a q} }{\Gamma( a q+1)} \sum_{k=1}^{k_q} \left( \begin{matrix} q-2 \\ k -1\end{matrix} \right) m_{2k}m_{2q-2k},
\end{multline*}
for any $q_0\geq 3$,
we obtain the following  differential inequality for the partial sum $E^n$:
\begin{equation}\label{E^n_t}
\frac{\mathrm{d}}{\mathrm{d} t} E^n \leq S_0 - A_2 m_0 S_1 + A_2 m_2 S_2 + 4 S_3.
\end{equation}
We proceed to estimate each $S_i$, $i=0,1,2,3$ separately.

For later purposes,  we introduce a constant which will be an upper bound for the first $q_0-1$ polynomial moments and their derivatives. Let,
\begin{equation}\label{cq0}
c_{q_0} = \max_{q=1,\dots, q_0-1} \left\{ C_q^*, C_q C^*_{q-1} \right\},
\end{equation}
where $C_q^*$ is the constant from Lemma \ref{poly propagation}, and $C_q C_{q-1}^*$ is from Remark \ref{derivatives}. Then
\begin{align}\label{first q0}
m_{2q}(t) \leq c_{q_0} \quad \mbox{and}\quad  m_{2q}'(t) \leq c_{q_0}  \quad \mbox{for} \;\; q=1,2,...,q_0 -1.
\end{align}

\subsubsection*{Term $S_0$} Since the mass is conserved \eqref{CL}, i.e. $m_0'=0$, the first term in the sum $S_0$ is equal to zero.
Hence, by \eqref{first q0} we have
\begin{align*}
S_0 &=  \alpha^a \sum_{q=1}^{q_0 -1} \frac{ m'_{2q} \, \alpha^{a (q-1)} }{\Gamma( a q+1)}\\
& \leq c_{q_0} \alpha^a \sum_{q=1}^{q_0 -1} \frac{ \alpha^{a (q-1)} }{\Gamma( a q+1)}.
\end{align*}
Reindexing the sum and using the monotonicity of the Gamma function $\Gamma(a q+ a +1)\geq \Gamma(q+1)$, for $a>1$, $q=0,1,2,\dots$, we obtain
\begin{align}\label{S0}
S_0 & \leq c_{q_0} \alpha^a \sum_{q=0}^{q_0 -2} \frac{ \alpha^{a q} }{\Gamma( a q+ a + 1)} \nonumber \\
&\leq c_{q_0} \alpha^a \sum_{q=0}^{q_0 -2} \frac{  \left( \alpha^{a} \right)^q }{\Gamma( q+1)} \nonumber \\
& \leq c_{q_{0}}\alpha^a e^{\alpha^a} \leq 2 c_{q_{0}}\alpha^a,
\end{align}
for $\alpha$ small enough so that
\begin{equation}\label{choosing alpha}
e^{\alpha^a} \leq 2, \quad \text{or} \quad \alpha \leq (\text{ln}2)^{1/a}.
\end{equation}

\subsubsection*{Term $S_1$}
Using the bound \eqref{first q0} and parameter $\alpha$ chosen so that \eqref{choosing alpha} holds, we have
\begin{align}\label{S1}
S_1 &= E^n - m_0 - \sum_{q=1}^{q_0-1} \frac{m_{2q} \alpha^{aq}}{\Gamma(aq+1)} \nonumber\\
&\geq E^n - m_0 - 2 c_{q_0} \alpha^a.
\end{align}

\subsubsection*{Term $S_2$} Using again the monotonicity of the Gamma function, we obtain
\begin{align}\label{S2}
S_2 &= \alpha^a \sum_{q=q_0}^n \frac{m_{2q-2} \alpha^{a(q-1)}}{\Gamma(aq+1)} \nonumber\\
&\leq  \alpha^a \sum_{q=q_0}^n \frac{m_{2(q-1)} \alpha^{a(q-1)}}{\Gamma(a(q - 1) +1)} \nonumber \\
& \leq  \alpha^a E^{n-1} \leq  \alpha^a E^n.
\end{align}
\subsubsection*{Term $S_3$} Using the property of the Beta function $B(x,y)=\frac{\Gamma(x)\Gamma(y)}{\Gamma(x+y)}$, the term $S_3$ can be rearranged
\begin{align*}
S_3 &= \sum_{q=q_0}^{n} \frac{ q(q-1)   \, \varepsilon_{\kappa, q} \,\alpha^{a q}}{\Gamma(aq+1)}  \sum_{k=1}^{k_q} \left( \begin{matrix} q-2 \\ k -1\end{matrix} \right)
\frac{m_{2k}\alpha^{ak} }{\Gamma(ak+1)} \frac{ m_{2q-2k}\alpha^{a q-ak}}{\Gamma(aq-ak+1)} \\
& \hspace*{5cm}\times {B(ak+1, aq - a k+1)}\Gamma(aq+2)\\
& \leq  \sum_{q=q_0}^{n}  q(q-1)(aq+1) \, \varepsilon_{\kappa, q} \left(  \sum_{k=1}^{k_q} \left( \begin{matrix} q-2 \\ k -1\end{matrix} \right) B(ak+1, aq - a k+1) \right) \\ & \hspace*{5cm}\times\left(  \sum_{k=1}^{k_q} \frac{m_{2k}\alpha^{ak} }{\Gamma(ak+1)} \frac{ m_{2q-2k}\alpha^{a q-ak}}{\Gamma(aq-ak+1)} \right)\\
&\leq a C_a  \sum_{q=q_0}^{n} q^{2-a} \varepsilon_{\kappa, q}  \sum_{k=1}^{k_q} \frac{m_{2k}\alpha^{ak} }{\Gamma(ak+1)} \frac{ m_{2q-2k}\alpha^{a q-ak}}{\Gamma(aq-ak+1)},
\end{align*}
where the last estimate follows by an application of Lemma \ref{Beta function}.

Since by \eqref{order K} we have
$$
2 - a = 2- \frac{2}{s} \leq 1 - \frac{\kappa}{2},
$$
 Remark \ref{decay of b} implies that $\left\{  q^{2-a} \varepsilon_{\kappa, q} \right\}_q$  is a decreasing sequence and
\begin{equation}\label{decay in proof}
 q^{2-a} \varepsilon_{\kappa, q} \rightarrow 0, \quad \mbox{as} \,\, q\rightarrow \infty.
\end{equation}
If we denote $c_a = a C_a$, then the monotonicity of $\left\{  q^{2-a} \varepsilon_{\kappa, q} \right\}_q$ yields
\begin{align}\label{S3}
S_3 & \leq   c_a q_0^{2-a}  \varepsilon_{\kappa, q_0}  \sum_{q=q_0}^{n}  \sum_{k=1}^{\lfloor \frac{q+1}{2}\rfloor} \frac{m_{2k}\alpha^{ak} }{\Gamma(ak+1)} \frac{ m_{2q-2k}\alpha^{a q-ak}}{\Gamma(aq-ak+1)} \nonumber \\
& \leq  c_a q_0^{2-a}  \varepsilon_{\kappa, q_0}
\left( \sum_{k=1}^{\lfloor \frac{n+1}{2}\rfloor} \frac{m_{2k}\alpha^{ak} }{\Gamma(ak+1)}\right)
\left( \sum_{\ell=1}^{n-1} \frac{ m_{2\ell}\alpha^{a \ell}}{\Gamma(a\ell+1)}\right) \nonumber \\
& \leq c_a q_0^{2-a}  \varepsilon_{\kappa, q_0} E^n E^n
\end{align}

Going back to \eqref{E^n_t} and applying the bounds \eqref{S0}, \eqref{S1}, \eqref{S2}, \eqref{S3} we obtain a differential inequality for the partial sum $E_n$
\begin{multline*}
\frac{\mathrm{d}}{\mathrm{d} t} E^n(t) \leq - A_2 m_0(t) E^n(t) + c_{q_0} \alpha^a + A_2 m_0(t)^2  \\
+ A_2 m_0(t) c_{q_0} \alpha^a   + A_2 m_2(t) \alpha^a E^n(t) + 4 c_a q_0^{2-a} \varepsilon_{\kappa, q_0}  (E^n(t))^2.
\end{multline*}
Due to the conservation of mass, i.e. $m_0(t) = m_0(0)$, and the dissipation of energy, i.e. $m_2(t) \leq m_2(0)$ for the weak solution $f$, we have
\begin{multline}\label{ODE E^n}
\frac{\mathrm{d}}{\mathrm{d} t} E^n(t) \leq - A_2 m_0(0) E^n(t) + c_{q_0} \alpha^a + A_2 m_0(0)^2  \\
+ A_2 m_0(0) c_{q_0} \alpha^a   + A_2 m_2(0) \alpha^a E^n(t) + 4 c_a q_0^{2-a} \varepsilon_{\kappa, q_0}  (E^n(t))^2.
\end{multline}

To show that such $E^n(t)$ is uniformly bounded in time and $n$, we define
\begin{equation*}
T_n := \sup \left\{ t \geq 0: E^n(\tau) < 4 M_0, \forall \tau \in [0,t) \right\},
\end{equation*}
where $M_0$ is the bound on the initial data in \eqref{thm}, with the goal of proving that $T_n = \infty$ for all $n\in\mathbb{N}$.

The number $T_n$ is well-defined and positive. Indeed, since $\alpha<\alpha_0$, at time $t=0$ we have
\begin{align*}
E^n(0) &=  \sum_{q=0}^{n} \frac{\alpha^{a q} }{\Gamma( a q+1)} m_{2q}(0)\\
&<  \sum_{q=0}^{\infty} \frac{\alpha_0^{a q} }{\Gamma( a q+1)} m_{2q}(0) \\
&= \mathcal{M}_{\alpha_0, \frac{2}{a}}(0) < 4 M_0,
\end{align*}
uniformly in $n$, by \eqref{thm}. Since $E^n(t)$ are continuous functions of $t$, $E^n(t)<4 M_0$ for $t$ on some positive time interval $[0,t_n)$, $t_n >0$. Therefore, $T_n>0$.

Also, since $E^n(t) \leq M_0$ holds on the time interval $[0,T_n]$, from \eqref{ODE E^n} we obtain the following differential inequality
\begin{multline*}
\frac{\mathrm{d}}{\mathrm{d} t} E^n(t) \leq - A_2 m_0(0) E^n(t) + c_{q_0} \alpha^a + A_2 m_0(0)^2 \\
+ A_2 m_0(0) c_{q_0} \alpha^a   + A_2 m_2(0) \alpha^a M_0 + 4 c_a q_0^{2-a} \varepsilon_{\kappa, q_0}  (M_0)^2.
\end{multline*}
We conclude that
\begin{align}\label{En ode}
E^n(t) &\leq M_0 + \frac{c_{q_0} \alpha^a + A_2 m_0(0)^2 + A_2 m_0(0) c_{q_0} \alpha^a   + A_2 m_2(0) \alpha^a M_0 + 4 c_a q_0^{2-a} \varepsilon_{\kappa, q_0}  (M_0)^2 }{A_2 m_0(0) } \nonumber \\
& = M_0 + m_0(0) + \alpha^a \left( \frac{c_{q_0}}{A_2 m_0(0)} + c_{q_0} + \frac{m_2(0) M_0}{m_0(0)} \right) + \frac{4 c_a q_0^{2-a} \varepsilon_{\kappa, q_0} (M_0)^2 }{A_2 m_0(0)}.
\end{align}
First, since $q_0^{2-a} \varepsilon_{\kappa, q_0} $ converges to zero as $q_0$ tends to infinity, we can choose $q_0$ large enough so that
\begin{equation}\label{one}
\frac{4 c_a q_0^{2-a} \varepsilon_{\kappa, q_0} (M_0)^2 }{A_2 m_0(0)} \leq \frac{M_0}{2}.
\end{equation}
Then, we choose $\alpha$ sufficiently small so that
\begin{equation}\label{two}
\alpha^a \, \left(\frac{c_{q_0}}{A_2 m_0(0)} + c_{q_0} + \frac{m_2(0) M_0}{m_0(0)}\right) \leq \frac{M_0}{2}.
\end{equation}
Therefore, applying estimates \eqref{one}, \eqref{two} and $m_0(0) \leq M_0$ to the differential inequality \eqref{En ode} yields
\begin{equation*}
E^n(t) \leq 3 M_0 < 4 M_0,
\end{equation*}
for any $t \in [0,T_n]$. Therefore, the strict inequality $E^n < 4 M_0$ holds on the closed interval $[0,T_n]$ for each $n$. But, since $E^n(t)$ is continuous function in $t$, the inequality $E^n(t) < 4 M_0$ holds on a slightly larger interval $[0, T_n + \mu)$, $\mu>0$. This contradicts definition of $T_n$ unless $T_n = + \infty$ for all $n$. Therefore,
\begin{equation*}
E^n(t) < 4 M_0 \quad \text{for any} \ t \in [0,+\infty) \ \text{and for all} \ n \in \mathbb{N}.
\end{equation*}
Hence, letting $n\rightarrow \infty$, we conclude that $\mathcal{M}_{\alpha, s}(t) < 4 M_0$ for all $t\geq 0$.
\vspace{10pt}

Part (b) of Theorem \eqref{thm} can be proved completely analogously to the proof of part (a). This is due to the similarity of the differential inequalities for polynomial moments \eqref{polynomial moment ODE K} and  \eqref{polynomial moment ODE B}. In addition, according to \eqref{decay rate} the decay rate of the sequences $\varepsilon_k,q$ and $\varepsilon_\beta, q$  depends in the exact same way on the the singularity rate of the angular kernel ($\kappa$ for the Kac equation and $\beta$ for the Boltzmann equation).
\end{proof}

\appendix
\section{Auxiliary results}\label{sec-app}

\begin{lemma}\label{lemma convex binomial expansion estimate}
Let $a, b \geq 0$, $t\in [0,1]$ and $p\in (0,1] \cup [2, \infty)$. Then
\begin{multline*}
\left( t a + (1-t) b \right)^p + \left( (1-t) a + t b \right)^p -a^p - b^p \\ \leq -2 t (1-t) (a^p + b^p) + 2 t (1-t) (a b^{p-1}+ a^{p-1}b).
\end{multline*}
\end{lemma}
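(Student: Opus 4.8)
For $a,b \ge 0$, $t \in [0,1]$, $p \in (0,1] \cup [2,\infty)$,
\[
(ta + (1-t)b)^p + ((1-t)a + tb)^p - a^p - b^p \le -2t(1-t)(a^p+b^p) + 2t(1-t)(ab^{p-1} + a^{p-1}b).
\]

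The plan is as follows.

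First I would dispose of the degenerate cases. If $a = 0$ or $b = 0$, say $b = 0$, then the left side is $(ta)^p + ((1-t)a)^p - a^p = a^p(t^p + (1-t)^p - 1)$ and the right side is $-2t(1-t)a^p$; since $t^p + (1-t)^p - 1 \le 0$ for $p \ge 1$ (by concavity of $x\mapsto x^p$... wait, $p\ge 1$ gives convexity, so actually $t^p+(1-t)^p \le t+(1-t)=1$ fails)—more carefully, for $p \ge 1$, $t^p \le t$ and $(1-t)^p \le 1-t$, so $t^p+(1-t)^p-1 \le 0$, and for $p\in(0,1]$ we need $t^p+(1-t)^p-1 \le -2t(1-t)$, which can be checked directly. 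Then WLOG assume $a,b > 0$, and by homogeneity of degree $p$ I would normalize, e.g. set $b = 1$ and write $x = a/b \ge 0$, reducing to a one-variable inequality in $x$ (with $t$ as a parameter). Also by the symmetry $t \leftrightarrow 1-t$ I may assume $t \in [0,1/2]$.

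The core step is to recognize the algebraic identity that makes the inequality natural. Writing $u = ta + (1-t)b$ and $w = (1-t)a + tb$, note $u + w = a + b$ and $uw = t(1-t)(a^2+b^2) + (t^2 + (1-t)^2)ab$, while $a^2 + b^2 = (a+b)^2 - 2ab$; a short computation gives $uw - ab = t(1-t)(a-b)^2$ and $ab - uw \le 0$, so the pair $(u,w)$ is "more balanced" than $(a,b)$ in the sense of majorization: $\{a,b\} \succ \{u,w\}$. For $p \ge 1$ the map $x \mapsto x^p$ is convex, so $x^p + y^p$ is Schur-convex, giving $u^p + w^p \le a^p + b^p$; for $p \in (0,1]$ it is concave and one gets the reverse, so the two ranges $(0,1]$ and $[2,\infty)$ genuinely need separate treatment. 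But majorization alone only yields $u^p+w^p - a^p - b^p \le 0$ (for $p\ge1$), not the quantitative bound with the explicit $-2t(1-t)(a^p+b^p) + 2t(1-t)(ab^{p-1}+a^{p-1}b)$ remainder. To get the sharp constant I would instead use a direct convexity/Taylor argument: write $u = a - t(a-b)$ and expand, or better, use the two-point convexity estimate. Specifically, for $p \ge 2$ I would use that $x\mapsto x^p$ satisfies, for any $c,d \ge 0$ and $\lambda\in[0,1]$,
\[
(\lambda c + (1-\lambda)d)^p \le \lambda c^p + (1-\lambda)d^p - \lambda(1-\lambda)\bigl(\text{something}\bigr),
\]
and sum the two instances ($\lambda = t$ applied to $(a,b)$ and $(b,a)$); the cross terms $ab^{p-1}+a^{p-1}b$ arise from the mixed second-order terms. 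The cleanest route is probably to define $\phi(t) := (ta+(1-t)b)^p + ((1-t)a+tb)^p$ and $\psi(t) := a^p + b^p - 2t(1-t)(a^p+b^p) + 2t(1-t)(ab^{p-1}+a^{p-1}b)$, observe $\phi(0)=\psi(0)$ and $\phi(1)=\psi(1)$ and $\phi'(0)=\psi'(0)$ by direct differentiation (the first-order terms match since $\psi$ has the right slope), and then compare second derivatives: show $\phi''(t) \le \psi''(t)$ on the whole interval, which reduces to a pointwise inequality of the shape $p(p-1)\bigl[(ta+(1-t)b)^{p-2} + ((1-t)a+tb)^{p-2}\bigr](a-b)^2 \le \text{const}$ — this is where the restriction $p\ge2$ (so $p-2\ge0$, exponents well-behaved) versus $p\le1$ enters, and convexity of the endpoints pins down the sign.

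The main obstacle is obtaining the \emph{sharp} coefficient $2t(1-t)$ and the exact cross-term $ab^{p-1}+a^{p-1}b$ rather than some weaker bound; plain majorization/Schur-convexity gives the qualitative sign but not these constants. I expect the cleanest fix is the $\phi$-vs-$\psi$ comparison: matching values at $t=0,1$ and the derivative at $t=0$, then reducing to a second-derivative inequality that, after dividing by $p(p-1)$ and using homogeneity, becomes a single-variable inequality in $x = a/b$ that can be verified by checking it is an equality at $x=1$ with a favorable sign of the derivative — routine but the case split $p\in(0,1]$ versus $p\in[2,\infty)$ must be carried through both times. I would also double-check the boundary case $p=2$ directly, where the inequality is in fact an identity (both sides equal $a^2+b^2 - 2t(1-t)(a-b)^2$), which serves as a useful sanity check and suggests the inequality is tight there.
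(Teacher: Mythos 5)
Your proposed route breaks down at two concrete points, both in the core $\phi$-versus-$\psi$ comparison. First, the claim $\phi'(0)=\psi'(0)$ is incorrect except at $p=2$: direct differentiation gives $\phi'(0)=-p(a-b)(a^{p-1}-b^{p-1})$ while $\psi'(0)=-2(a-b)(a^{p-1}-b^{p-1})$, so the slopes match only when $p=2$. Second, and more seriously, the pointwise inequality $\phi''(t)\le\psi''(t)$ that your plan hinges on is false in the interior of $[0,1]$: $\psi''\equiv 4(a-b)(a^{p-1}-b^{p-1})$ is constant in $t$, whereas $\phi''(t)=p(p-1)(a-b)^2\bigl[(ta+(1-t)b)^{p-2}+((1-t)a+tb)^{p-2}\bigr]$, and at $a=2$, $b=1$, $p=3$, $t=\tfrac12$ one computes $\phi''(\tfrac12)=18>12=\psi''$. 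So the second-derivative comparison cannot deliver the bound, and with the endpoint slopes not matching either, the Taylor-comparison argument as sketched has no way to close. (The lemma itself holds at this data point, $\phi(\tfrac12)-a^p-b^p=-\tfrac94\le -\tfrac32$; it is the mechanism, not the statement, that fails.)

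There is an elementary fix that sidesteps all of this. Note the algebraic identity
\begin{equation*}
t^2 a^p + (1-t)^2 b^p + t(1-t)\bigl(ab^{p-1}+a^{p-1}b\bigr) \;=\; \bigl(ta+(1-t)b\bigr)\bigl(ta^{p-1}+(1-t)b^{p-1}\bigr).
\end{equation*}
Hence, when $ta+(1-t)b>0$, the single estimate
\begin{equation*}
\bigl(ta+(1-t)b\bigr)^p \;\le\; t^2 a^p + (1-t)^2 b^p + t(1-t)\bigl(ab^{p-1}+a^{p-1}b\bigr)
\end{equation*}
is equivalent, after dividing by $ta+(1-t)b$, to $\bigl(ta+(1-t)b\bigr)^{p-1}\le t a^{p-1}+(1-t)b^{p-1}$, which is Jensen's inequality for $x\mapsto x^{p-1}$ on $(0,\infty)$; this map is convex precisely when $(p-1)(p-2)\ge 0$, i.e.\ $p\in(0,1]\cup[2,\infty)$, which is exactly where the hypothesis places $p$. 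Writing the same estimate with $t$ and $1-t$ interchanged and adding, the coefficients combine via $t^2+(1-t)^2=1-2t(1-t)$, yielding the stated inequality. The boundary cases $a=0$, $b=0$, or $ta+(1-t)b=0$ follow by inspection or by a limiting argument, using the convention $0^{p-1}=+\infty$ when $p<1$ so that the right-hand side is $+\infty$ there and the inequality is vacuous.
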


\begin{lemma}\label{lemma polynomial inequality}
Assume $p>1$ and let $k_p=\lfloor \frac{p+1}{2} \rfloor$. Then, for all $x,y>0$ the following inequality holds:
\begin{equation*}
(x+y)^p \leq  \sum_{k=0}^{k_p}  \left( \begin{matrix} p \\ k \end{matrix} \right)  \left( x^k y^{p-k} + x^{p-k}y^k \right).
\end{equation*}
\end{lemma}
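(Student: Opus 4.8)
The plan is to reduce to a one–variable inequality and then treat the polynomial case and the general case separately. Both sides are positively homogeneous of degree $p$ and symmetric under $x\leftrightarrow y$, so I may assume $0<x\le y$; dividing by $y^{p}$ and setting $t:=x/y\in(0,1]$, the claim becomes
\[
(1+t)^{p}\ \le\ \sum_{k=0}^{k_{p}}\binom{p}{k}\bigl(t^{k}+t^{p-k}\bigr),\qquad t\in(0,1].
\]
For $p\in\mathbb N$ — the only case actually used in Lemma~\ref{lemma polynomial moment ODE}, where $p=q-2$ — this is immediate from the binomial theorem: expand $(1+t)^{p}=\sum_{k=0}^{p}\binom{p}{k}t^{k}$, pair the $k$-th term with the $(p-k)$-th term using $\binom{p}{k}=\binom{p}{p-k}$, and note that this rewrites $(1+t)^{p}$ as $\sum_{0\le k<p/2}\binom{p}{k}(t^{k}+t^{p-k})$, plus a single middle term $\binom{p}{p/2}t^{p/2}$ when $p$ is even. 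Since $\lfloor p/2\rfloor\le k_{p}$ and all binomial coefficients are nonnegative, the stated sum dominates this termwise, the slack being precisely the summands with $\lfloor p/2\rfloor<k\le k_{p}$ (plus a second copy of the middle term in the even case).

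For general real $p>1$ I would use the generalized binomial series $(1+t)^{p}=\sum_{k\ge0}\binom{p}{k}t^{k}$, which converges absolutely on $[0,1]$ because $p>0$. Splitting at $k_{p}$, it suffices to show the upper tail is controlled by the ``mirror'' sum: $\sum_{k=k_{p}+1}^{\infty}\binom{p}{k}t^{k}\le\sum_{j=0}^{k_{p}}\binom{p}{j}t^{p-j}$. Writing $m:=\lfloor p\rfloor$ and repeatedly using $\binom{p}{k+1}=\binom{p}{k}\frac{p-k}{k+1}$, I would establish two facts. (i) $k\mapsto\binom{p}{k}$ increases up to $k=k_{p}$ and decreases afterward, and from $k\ge m+2$ on the coefficients alternate in sign with strictly decreasing modulus, so $\sum_{k\ge m+2}\binom{p}{k}t^{k}\le0$ on $[0,1]$ and the tail is bounded by the finite block $\sum_{k=k_{p}+1}^{m+1}\binom{p}{k}t^{k}$. (ii) From $k_{p}>\tfrac{p-1}{2}$ one gets $2k_{p}\ge m$, so this block has at most $k_{p}+1$ terms; matching $k=k_{p}+i$ with $j=k_{p}-i+1$, one has $k_{p}+i\ge p-(k_{p}-i+1)$ (hence $t^{k_{p}+i}\le t^{\,p-(k_{p}-i+1)}$) and $\binom{p}{k_{p}+i}\le\binom{p}{k_{p}-i+1}$, the latter by grouping the ratio factors $\frac{p-\ell}{\ell+1}$ into pairs of the form $\frac{(p-k_{p})^{2}-(r+1)^{2}}{(k_{p}+1)^{2}-(r+1)^{2}}<1$ together with one leftover factor $\frac{p-k_{p}}{k_{p}+1}<1$. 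Summing these term-by-term comparisons and discarding the remaining nonnegative terms of the mirror sum gives the inequality. As an alternative for $1<p\le2$ one can stay in $t$: the claim is $(1+t)^{p}-1-t^{p}\le p(t+t^{p-1})$, both sides vanish at $t=0$, and comparing derivatives reduces it to $(1+t)^{p-1}-t^{p-1}\le1\le1+(p-1)t^{p-2}$, the first step being subadditivity of $s\mapsto s^{p-1}$ since $p-1\le1$; one then passes to arbitrary $p$ by induction on $\lfloor p\rfloor$ using $\binom{p}{k}=\binom{p-1}{k}+\binom{p-1}{k-1}$.

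The genuinely delicate step is the non-integer case. For integer $p$ the statement is just the binomial theorem with symmetric terms collected; for $p\notin\mathbb N$ one must tame the infinite tail of the generalized binomial series — the sign/monotonicity argument in (i) — and carefully reconcile $k_{p}=\lfloor\frac{p+1}{2}\rfloor$ with $\lfloor p\rfloor$ in the matching of (ii). Each individual estimate is elementary, but assembling them into an airtight proof of the full statement is where the work lies.
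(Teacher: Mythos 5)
The paper states this lemma in the Appendix without proof (it is essentially Lemma~2 of Bobylev~1997 and is also used in Alonso--Ca\~nizo--Gamba--Mouhot~2013 and in \cite{algapata15}), so there is no in-paper argument to compare against; I therefore assess your proof on its own.

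Your argument is correct and self-contained. After the standard reduction to the one-variable inequality
\[
(1+t)^{p}\le\sum_{k=0}^{k_{p}}\binom{p}{k}\bigl(t^{k}+t^{p-k}\bigr),\qquad t\in(0,1],
\]
you dispose of the integer case by the binomial theorem with symmetric collection of terms, and for non-integer $p$ you split the generalized binomial series at $k_{p}$. The two key observations both check out: (i) for $k\ge\lfloor p\rfloor+2$ the terms $\binom{p}{k}t^{k}$ alternate in sign starting with a negative term and have strictly decreasing modulus on $[0,1]$ (since $\tfrac{k-p}{k+1}t<1$), so the Leibniz estimate gives $\sum_{k\ge\lfloor p\rfloor+2}\binom{p}{k}t^{k}\le 0$; and (ii) for the finite nonnegative block $k_{p}+1\le k\le\lfloor p\rfloor+1$, matching $k=k_{p}+i$ with $j=k_{p}-i+1$ is legitimate because $2k_{p}\ge\lfloor p\rfloor$ keeps $j\ge 0$, because $2k_{p}\ge p-1$ gives $t^{k_{p}+i}\le t^{p-j}$ on $(0,1]$, and because the symmetric pairing of the ratio factors $\tfrac{p-\ell}{\ell+1}$ around $\ell=k_{p}$ gives $\binom{p}{k_{p}+i}\le\binom{p}{k_{p}-i+1}$ from $0<p-k_{p}<k_{p}+1$. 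Discarding the remaining nonnegative terms of the mirror sum then closes the estimate. The only part I would not accept as written is the closing alternative: the derivative comparison for $1<p\le 2$ is fine, but ``one then passes to arbitrary $p$ by induction on $\lfloor p\rfloor$ using Pascal's rule'' is a sketch, not an argument (in particular $k_{p}$, $k_{p-1}$ and the summation ranges do not line up automatically under Pascal). Since your main argument already covers all $p>1$, I would simply delete that last sentence rather than try to make the induction precise.
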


\begin{lemma}\label{lemma polynomial inequality 2}
Let $b\leq a \leq \frac{p}{2}$. Then for any $x, y \geq 0$
\begin{equation*}
x^a y^{p-a} + x^{p-a} y^a \leq x^b y^{p-b} + x^{p-b} y^b.
\end{equation*}
\end{lemma}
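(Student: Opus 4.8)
The plan is to use the symmetry of both sides under the interchange $x \leftrightarrow y$ to reduce to the case $x \ge y \ge 0$, and then to compare the two sides via an explicit factorization of their difference. Before doing so, I would clear away the degenerate cases: if $a=b$ the inequality is an equality, and if $p=a+b$ then the hypothesis $b \le a \le p/2$ forces $a=b=p/2$ and again the two sides coincide. Hence one may assume $x > y \ge 0$, $a > b$, and $p-a-b > 0$ (this last reduction also sidesteps any $0^0$ ambiguity, since then every exponent that can meet a zero base is positive except possibly $b$, and the case $b=0$ reduces the identity below to the classical rearrangement inequality $(x^a-y^a)(x^{p-a}-y^{p-a})\ge 0$).

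The heart of the argument is the identity, valid for $x,y \ge 0$,
\[
\big(x^b y^{p-b} + x^{p-b} y^b\big) - \big(x^a y^{p-a} + x^{p-a} y^a\big) = \big(x^{a-b} - y^{a-b}\big)\; x^b y^b \;\big(x^{p-a-b} - y^{p-a-b}\big),
\]
which I would establish by grouping $x^{p-b}y^b - x^{p-a}y^a = x^{p-a}y^b\big(x^{a-b}-y^{a-b}\big)$ and $x^b y^{p-b} - x^a y^{p-a} = -\,x^b y^{p-a}\big(x^{a-b}-y^{a-b}\big)$, factoring out $x^{a-b}-y^{a-b}$, and simplifying $x^{p-a}y^b - x^b y^{p-a} = x^b y^b\big(x^{p-a-b}-y^{p-a-b}\big)$. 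With $x \ge y \ge 0$ each of the three factors on the right is nonnegative: $x^{a-b}-y^{a-b}\ge 0$ since $a-b\ge 0$; $x^b y^b \ge 0$ trivially; and $x^{p-a-b}-y^{p-a-b}\ge 0$ since $p-a-b \ge p-2a \ge 0$, which is exactly where the hypothesis $a \le p/2$ enters. This yields the claimed inequality.

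I do not expect any genuine obstacle here; the only point requiring a little care is the bookkeeping of the boundary and degenerate cases, handled as above. As an alternative one can avoid the factorization altogether: for $x,y>0$ the function $g(s):=x^s y^{p-s}+x^{p-s}y^s$ is a sum of exponentials of affine functions of $s$, hence convex, and it is symmetric about $s=p/2$, so it is nonincreasing on $(-\infty,p/2]$; since $b\le a\le p/2$ this gives $g(a)\le g(b)$, which is the statement, and the cases $x=0$ or $y=0$ follow directly.
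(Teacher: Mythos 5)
Your proof is correct. The paper states Lemma~\ref{lemma polynomial inequality 2} in the appendix without proof, treating it as a standard auxiliary fact (see the remark that follows it), so there is no argument in the paper against which to compare. Your factorization of the difference as $\big(x^{a-b}-y^{a-b}\big)\,x^b y^b\,\big(x^{p-a-b}-y^{p-a-b}\big)$ is the natural elementary route, and after the symmetry reduction to $x\ge y$ each factor is nonnegative precisely because $a-b\ge 0$ and $p-a-b\ge p-2a\ge 0$, which is exactly where the hypothesis $a\le p/2$ enters; the handling of the degenerate cases $a=b$, $p=a+b$, and $b=0$ is careful and correct. The alternative argument you sketch, using convexity and symmetry about $s=p/2$ of $s\mapsto x^s y^{p-s}+x^{p-s}y^s$ for $x,y>0$, is equally valid and perhaps more economical; either would serve as a complete proof.
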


\begin{remark}
The above lemma is useful for comparing products of moments whose total homogeneity is the same. Namely,
\begin{equation}\label{monotonicity of moments}
m_0 \, m_{p} \leq m_1 \, m_{p-1} \leq m_2 \, m_{p-2} \leq ... \leq  m_{\lfloor \tfrac{p}{2}\rfloor} \, m_{p- \lfloor \tfrac{p}{2} \rfloor}.
\end{equation}
\end{remark}

\begin{lemma}\label{Beta function}
Let $p\geq 3$ and  $k_p=\lfloor \frac{p+1}{2} \rfloor$. Then for any $a>1$ we have
\begin{equation*}
 \sum_{k=1}^{k_p} \left( \begin{matrix} p-2 \\ k -1\end{matrix} \right) B(ak+1, a(p -  k)+1) \leq C_a q^{-(1+a)},
\end{equation*}
where the constant $C_a$ depends only on $a$.
\end{lemma}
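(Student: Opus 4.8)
The plan is to control the Beta function $B(ak+1, a(p-k)+1)$ by its known asymptotics and then sum against the binomial weights. First I would recall the integral representation $B(x,y)=\int_0^1 u^{x-1}(1-u)^{y-1}\,\mathrm{d}u$, or equivalently use $B(x,y)=\Gamma(x)\Gamma(y)/\Gamma(x+y)$ together with Stirling's formula. The key elementary fact is that for fixed $a>1$ and $1\le k\le p$, one has
\begin{equation*}
B(ak+1,\,a(p-k)+1)\;\le\;\frac{C}{\big(k(p-k)\big)^{\,?}}\;\cdot\;\frac{1}{\text{(something in }p\text{)}},
\end{equation*}
so the dominant contribution comes from the endpoints $k=1$ and $k=k_p$. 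Concretely, since $\Gamma(aq+2)=(aq+1)\Gamma(aq+1)$ grows superexponentially in $q=p$, while $\Gamma(ak+1)\Gamma(a(p-k)+1)$ is smallest (relative to $\Gamma(ap+\text{const})$) when one of the factors is $O(1)$, the term $k=1$ gives roughly $B(a+1, a(p-1)+1)\sim \Gamma(a+1)\,\Gamma(a(p-1)+1)/\Gamma(ap+1)\sim \Gamma(a+1)(ap)^{-a}$, which already produces the claimed $p^{-(1+a)}$ after one notes the binomial coefficient $\binom{p-2}{0}=1$ and an extra factor $p^{-1}$ must be extracted.

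Next I would split the sum $\sum_{k=1}^{k_p}\binom{p-2}{k-1}B(ak+1,a(p-k)+1)$ into the boundary terms $k\in\{1,2\}$ (and symmetrically near $k_p$, though the binomial weight is not symmetric here so care is needed) and the bulk $3\le k\le k_p-2$. For the boundary terms the binomial coefficients are bounded by a polynomial in $p$ of low degree, and the Beta factor decays like $p^{-a(k-1)}\cdot p^{-a}$ up to constants, which beats the polynomial growth for $a>1$; a direct estimate gives a bound of order $p^{-(1+a)}$ as required. For the bulk, I would use the bound $\binom{p-2}{k-1}B(ak+1,a(p-k)+1)\le \binom{p-2}{k-1}\frac{\Gamma(ak+1)\Gamma(a(p-k)+1)}{\Gamma(ap+1)}$ and compare with $\binom{p}{k}\frac{\Gamma(ak+1)\Gamma(a(p-k)+1)}{\Gamma(ap+2)}$; a convexity/log-concavity argument (Stirling applied to each Gamma) shows that $k\mapsto \Gamma(ak+1)\Gamma(a(p-k)+1)$ is maximized at the endpoints on $[1,p-1]$ since $a>1$ makes the function strictly log-convex in $k$, so the bulk terms are each no larger than the $k=1$ term times a geometric factor, and the whole bulk sum is dominated by a convergent geometric series times the $k=1$ estimate.

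The main obstacle I anticipate is making the endpoint-domination rigorous with explicit constants depending only on $a$: one needs a clean statement that for $a>1$ the quantity $\Gamma(ak+1)\Gamma(a(p-k)+1)/\Gamma(ap+1)$, times the binomial weight $\binom{p-2}{k-1}$, is summable in $k$ with sum $O(p^{-(1+a)})$ uniformly in $p$. The safest route is probably to write $B(ak+1,a(p-k)+1)=\int_0^1 u^{ak}(1-u)^{a(p-k)}\,\mathrm{d}u$, bound $\sum_k \binom{p-2}{k-1}u^{ak}(1-u)^{a(p-k)}$ by pulling out $u^a(1-u)^a$ and recognizing a binomial sum $\le (u^a+(1-u)^a)^{p-2}\le 1$ after using $u^a+(1-u)^a\le 1$ for $a\ge 1$, and then estimating $\int_0^1 u^{2a}(1-u)^{2a}(u^a+(1-u)^a)^{p-2}\,\mathrm{d}u$; the factor $u^{2a}(1-u)^{2a}$ forces the integrand to concentrate away from $0$ and $1$ where $u^a+(1-u)^a$ is strictly less than $1$, and a Laplace-type estimate then yields the decay $p^{-(1+a)}$. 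I would present this integral approach as the main argument, since it sidesteps delicate Stirling bookkeeping, and only invoke Stirling for the final power of $p$. Note finally that the statement writes $q$ where it means $p$, so I would read $q=p$ throughout.
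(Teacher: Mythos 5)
The paper states this lemma without proof (it appears as an auxiliary result in the appendix), so there is no internal argument to compare against; the closest precedent is the analogous Beta-function estimate in the cited reference \cite{algapata15}, which proceeds by the integral route you eventually settle on. Your main plan --- write $B(ak+1,a(p-k)+1)=\int_0^1 u^{ak}(1-u)^{a(p-k)}\,du$, interchange sum and integral, factor out $u^a(1-u)^a$ to expose a truncated binomial expansion bounded by $(u^a+(1-u)^a)^{p-2}$, then run a Laplace-type estimate --- is correct and does yield the claimed bound (with $q$ read as $p$, as you note). Two details in the sketch need correcting. First, the integral you should reach is
\[
\int_0^1 u^a(1-u)^a\bigl(u^a+(1-u)^a\bigr)^{p-2}\,du,
\]
not the version with $u^{2a}(1-u)^{2a}$; the latter is inconsistent with the factor you actually pulled out (and would produce the unclaimed decay $p^{-(1+2a)}$). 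Second, the statement that the integrand ``concentrates away from $0$ and $1$'' is backwards: for $a>1$ the factor $\bigl(u^a+(1-u)^a\bigr)^{p-2}$ equals $1$ at the endpoints and decays exponentially in $p$ in the interior, so the mass concentrates in boundary layers of width $\sim 1/p$ near $u=0$ and $u=1$, where the prefactor $u^a(1-u)^a$ contributes $\sim p^{-a}$; multiplying by the layer width gives $p^{-(1+a)}$. To make this rigorous, note that on $[0,\tfrac12]$ the function $h(u)=1-cu-u^a-(1-u)^a$ with $c=2(1-2^{1-a})>0$ satisfies $h(0)=h(\tfrac12)=0$ and $h''<0$, so $u^a+(1-u)^a\le 1-cu$, hence $\bigl(u^a+(1-u)^a\bigr)^{p-2}\le e^{-c(p-2)u}$ and
\[
\int_0^{1/2}u^a e^{-c(p-2)u}\,du\le \Gamma(a+1)\,\bigl(c(p-2)\bigr)^{-(a+1)} ;
\]
the contribution from $[\tfrac12,1]$ is symmetric. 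Your opening discussion via Stirling, log-convexity, and endpoint domination of $\Gamma(ak+1)\Gamma(a(p-k)+1)$ would also work but requires heavier bookkeeping and is not needed once the integral argument is cleaned up as above.
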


\section*{Acknowledgments}
Authors are grateful to Laurent Desvillettes for suggesting us to study the problem considered in this paper. We also thank Ricardo J. Alonso for discussions on the subject. We would like to thank Irene M. Gamba and Nata\v{s}a Pavlovi\'{c} for their valuable remarks. The work of M. P.-\v C. was partially supported by the NSF grant NSF-DMS-RNMS-1107465 and by Project No. ON174016 of the Serbian Ministry of Education, Science and Technological Development.  The work of  M.T. has been supported by NSF grants DMS-1413064, NSF-DMS-RNMS-1107465 and DMS-1516228. Support from the Institute of Computational Engineering and Sciences (ICES) at the University of Texas Austin is gratefully acknowledged.

\medskip

\medskip

\end{document}